\def\crulefill{\leavevmode\leaders\hrule height 1pt\hfill\kern 0pt}
\long\def\QUERY#1{%
\leavevmode\newline%
\noindent$\star\star\star$\thinspace\textsf{Comment/Query}\crulefill\newline%
   \space #1\newline\hbox to 120mm{\crulefill}$\star\star\star$\newline
}
\numberwithin{equation}{section} \theoremstyle{definition}
\newtheorem{Defn}[equation]{Definition}
\theoremstyle{plain}
\newtheorem{Prop}[equation]{Proposition}
\newtheorem{Theorem}[equation]{Theorem}
\newtheorem{Lemma}[equation]{Lemma}
\def\Case#1{\medskip\noindent\textbf{Case #1}:\leavevmode\newline}
\def\enumerate{\begingroup\ifnum\@enumdepth>3\@toodeep\else
      \advance\@enumdepth\@ne
      \edef\@enumctr{enum\romannumeral\the\@enumdepth}%
      \topsep\z@\parskip\z@
      \list{\csname label\@enumctr\endcsname}
        {\@nmbrlisttrue\let\@listctr\@enumctr
         \parsep\z@\itemsep\z@\topsep\z@
         \setcounter{\@enumctr}{0}
         \def\makelabel##1{\hss\llap{\rm ##1}}
       }\fi}
\let\bar=\overline
\let\epsilon=\varepsilon
\def\({\big(}
\def\){\big)}
\def\M{\mathfrak M}
\def\0{\underline{0}}
\def\B{\mathscr B}
\def\G{\mathcal G}
\def\H{\mathscr H}
\def\W{ B_n(\delta)}
\def\B{\mathscr B_n}
\DeclareMathOperator{\Char}{char}
\DeclareMathOperator{\Rad}{Rad}
\def\F{\mathcal F}
\def\Std{\mathscr{T}^{std}}
\def\s{\mathfrak s}
\def\ts{\tilde\s}
\def\t{\mathfrak t}
\def\ss{\mathbf s}
\def\ts{\mathbf t}
\def\us{\mathbf u}
\newcommand{\cba}[1]{\mathscr {B}_{#1}}
  \gdef\set#1{\mathinner{\lbrace\,{\mathcode`\|"8000%
                                   \let|\midvert #1}\,\rbrace}}
  \gdef\seT#1{\mathinner{\Big\lbrace\,{\mathcode`\|"8000%
                                   \let|\midverT #1}\,\Big\rbrace}}
\def\midvert{\egroup\mid\bgroup}
\def\midverT{\egroup\,\Big|\,\bgroup}
\def\Set[#1]#2|#3|{\Big\{\ #2\ \Big| \
           \vcenter{\hsize #1mm\centering #3}\Big\}}
\title[ Singular parameters for  the Birman-Murakami-Wenzl
algebra]{Singular parameters for the  Birman-Murakami-Wenzl algebra}
\author{Hebing Rui and Mei Si}
\address{H.R. Department of Mathematics,  East China Normal
University, Shanghai, 200241, China} \email{hbrui@math.ecnu.edu.cn}
\address{M.S. Department of Mathematics,  Shanghai Jiaotong
University, Shanghai, 200240, China} \email{simei@sjtu.edu.cn}
\thanks{Both of us  are supported in part by NSFC}
\begin{document}
\baselineskip15pt
\begin{abstract} In this paper, we classify the
singular parameters for the Birman-Murakami-Wenzl algebra over an
arbitrary field. Equivalently, we give a criterion for the
Birman-Murakami-Wenzl algebra being Morita equivalent to the direct
sum of the Hecke algebras associated to certain symmetric groups.
\end{abstract}

\sloppy \maketitle

\section{Introduction}

The Birman-Murakami-Wenzl algebra $\B$ was introduced independently
by Birman, Wenzl~\cite{BW} and Murakami~\cite{Mu} in order to study
the link invariants.  It is cellular over a commutative
ring~\cite{Xi} in the sense of \cite{GL}. Further, Xi classified its
irreducible modules over an arbitrary field~\cite{Xi}.

Recently, Enyang~\cite{E} constructed the Jucys-Murphy basis for
each cell module of $\B$. We lifted Enyang's basis to get the
Jucys-Murphy basis for $\B$~\cite{RS:bmw}. Over certain fields, we
use Jucys-Murphy basis of $\B$ to construct its orthogonal basis.
This enables us to  compute the  Gram determinant associated to each
cell module of  $\B$. Therefore, we  determine explicitly the
semi-simplicity of $\B$ over an arbitrary field~\cite{RS:bmw}. We
remark that Wenzl has got some partial results in \cite{W2}.

 We also
use our results on Gram determinants to classify the blocks of $\B$
over certain fields~\cite{RS2}. Via such results, we determine
explicitly whether the Gram determinant associated to a cell module
is equal to zero or not~\cite{RS2}. This is equivalent to saying
that a cell module of $\B$ is equal to its simple head or not.

Morton and Wassermann~\cite{MW} proved that $\B$ is isomorphic to
the Kauffman tangle algebra~\cite{K}. Further, by specialization,
they proved that the Kauffman tangle algebra is isomorphic to the
Brauer algebra~\cite{B}. Therefore, the Brauer algebra~\cite{B} can
be considered as the classical limit of $\B$.

In \cite{KX}, K$\ddot{\text{o}}$nig and Xi~\cite{KX} proved that the
Brauer algebra can be obtained from some inflations of the group
algebras of certain symmetric groups along certain vector spaces.
They introduced the notion of singular parameters and proved that
the Brauer algebra is Morita equivalent to the direct sum of such
group algebras if the defining parameter is not singular. However,
there is no criterion to determine whether the defining parameter is
singular or not.

The aim of this paper is to give a criterion to determine the
singular parameters for the Brauer algebra.  In fact, we will deal
with $\B$ instead of the Brauer algebra.

We introduce the notion of singular parameters for $\B$ over an
arbitrary field. Via some results on the inflations in \cite{KX}, we
prove that $\B$ is Morita equivalent to the direct sum of  Hecke
algebras associated to certain symmetric groups if the defining
parameters are not singular. Further, we  give an explicit criterion
on the singular parameters for  $\B$ over an arbitrary field. By
specialization, we  also obtain  the explicit criterion on the
singular parameters for Brauer algebras over an arbitrary field.

We organize our paper as follows. In section~2, after recalling the
inflation of an algebra along a vector space in \cite{KX}, we give
Theorem~\ref{main} and Theorem~\ref{main1},  the main results of
this paper, which are the criterions on the singular parameters for
 $\B$ and the Brauer algebra, respectively. In Section~3, we recall some of our
results on the representations of $\B$ over an arbitrary field. We
will use them to prove Theorem~\ref{main} in Section~4.

{\bf Acknowledgement.}  We thank Professor Goodman for explaining
the relationship between the classical limit of the
Birman-Murakami-Wenzl algebra and the Brauer algebra in \cite{MW}.

\section{The main results}
In this section, we state the results on the classification of
singular parameters for $\B$  and Brauer algebras over an arbitrary
field. We start by recalling some results on the inflation of an
algebra along a vector space in \cite{KX}.

Throughout, we assume that  $\kappa$ is a field with characteristic
$\Char(\kappa)$ either zero or $ p$ with $p>0$. By abusing of
notation, we will use $p$ instead of $\Char (\kappa)$. When $\Char
(\kappa)=0$, we set $p=\infty$.

Given  a finite dimensional $\kappa$-space $V$, a $\kappa$-algebra
$B$, and a $\kappa$-bilinear form  $\phi: V\otimes V\rightarrow B$,
K$\ddot{\text{o}}$nig and Xi~\cite{KX} define a $\kappa$-algebra $A$
which is equal to $V\otimes V\otimes B$ as $\kappa$-vector space.
The multiplication of $A$ is defined on basis elements as follows:
\begin{equation}\label{mul} (a\otimes b\otimes x)\cdot (c\otimes
d\otimes y)=a\otimes d\otimes x\phi(b, c)y.\end{equation}
K$\ddot{\text{o}}$nig and Xi~\cite{KX} called this $A$  the
inflation of $B$ along $V$ if there is
 a $\kappa$-linear involution $\sigma$ on $B$ with
$\sigma(\phi(b, c))=\phi(c, b)$ such that  this $\sigma$ can be
extended to  the $\kappa$-linear involution $\tau$ on $A$ satisfying
\begin{equation}\label{mulin}\tau(a\otimes b\otimes x)=b\otimes a\otimes
\sigma(x).\end{equation} In the remaining of this paper, we will use
$\sigma$ instead of $\tau$ if there is no confusion.
 It has been pointed in \cite{KX} that
$A$ may not have a unit.

If $B$ is a  simple $\kappa$-algebra,  there is a unique irreducible
$B$--module which is denoted by L. Let $v_1, v_2, \cdots, v_\ell$ be
a $\kappa$-basis of $V$ with $\dim V=\ell$. K$\ddot{\text{o}}$nig
and Xi~\cite{KX} considered the left $A$-module
\begin{equation}\label{pln}
P(L, i)=V\otimes v_i\otimes L,\end{equation} for some basis element
$v_i$ of $V$ and its $\kappa$-subspace
\begin{equation}\label {nln} N_\phi(L, i)=\{\sum_{v\in V, l\in L}
v\otimes v_i\otimes l \in P(L, \ell)\mid \sum_{v, l} \phi(w, v)l=0,
\forall w\in V\}.\end{equation} They proved that $N_\phi(L, i)$ is
an $A$-submodule of $P(L, i)$ and the corresponding quotient module
$P(L, i)/N_\phi(L, i)$ is irreducible~\cite[Lemma~3.2]{KX}. If
$N_\phi(L, i)\neq 0$, the bilinear form $\phi$ is called
singular~\cite{KX}. The following definition can be found in
\cite{KX}.

\begin{Defn}\label{sing}\cite{KX} Given a $\kappa$-algebra $B$,
 let $\text{Rad} B$ be its  Jacobson radical. Let $\bar
\phi=\pi\circ \phi$ where $\pi: B\rightarrow B/\text{Rad} B$ is the
canonical epimorphism. The bilinear form  $\phi$ is called singular
if $N_{\bar\phi}(L, i)\neq 0$ for some irreducible $B$-module $L$
and some basis element $v_i\in V$.\end{Defn}

The key point is the following theorem, which follows from
Corollary~3.5 and Proposition~4.2 in~\cite{KX}.

\begin{Theorem}\label{main2}\cite{KX}  Given the $\kappa$-algebra $V\otimes V\otimes B$
which is the inflation of the $\kappa$-algebra $B$ along the
$\kappa$-vector space $V$. If $\phi: V\otimes V\rightarrow B$, the
corresponding bilinear form, is non-singular, then $V\otimes
V\otimes B\overset {Morita} \sim B$.\end{Theorem}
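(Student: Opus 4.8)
The plan is to realise the inflation $A = V\otimes V\otimes B$ concretely as a full matrix algebra $M_\ell(B)$ over $B$ under the non-singularity hypothesis, and then to invoke the classical Morita equivalence between a matrix algebra and its coefficient algebra. Fix the basis $v_1,\dots,v_\ell$ of $V$, write $G=(\phi(v_i,v_j))_{1\le i,j\le\ell}\in M_\ell(B)$ for the Gram matrix of $\phi$, and let $E_{ik}$ denote the matrix units of $M_\ell(B)$ (with the convention $bE_{ik}\cdot cE_{rs}=\delta_{kr}\,bc\,E_{is}$). First I would define the $\kappa$-linear map
\[
\Psi\colon A\longrightarrow M_\ell(B),\qquad
\Psi(v_i\otimes v_j\otimes x)=\sum_{k=1}^{\ell} x\,\phi(v_j,v_k)\,E_{ik},
\]
and verify directly from the multiplication rule \eqref{mul} that $\Psi$ is an algebra homomorphism: both $\Psi\big((v_i\otimes v_j\otimes x)(v_k\otimes v_l\otimes y)\big)$ and $\Psi(v_i\otimes v_j\otimes x)\,\Psi(v_k\otimes v_l\otimes y)$ evaluate to the matrix whose $(i,b)$-entry is $x\,\phi(v_j,v_k)\,y\,\phi(v_l,v_b)$ and whose other rows vanish. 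Since $\dim_\kappa A=\ell^2\dim_\kappa B=\dim_\kappa M_\ell(B)$, the map $\Psi$ is an isomorphism if and only if it is surjective.

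Next I would pin down surjectivity. Reading off the image one row at a time, $\Psi$ is surjective exactly when, for every target row $(b_1,\dots,b_\ell)\in B^{\oplus\ell}$, the system $\sum_j x_j\,\phi(v_j,v_k)=b_k$ $(1\le k\le\ell)$ admits a solution $x_1,\dots,x_\ell\in B$; that is, exactly when right multiplication by $G$ is surjective on $B^{\oplus\ell}$, which, $M_\ell(B)$ being Artinian, is equivalent to $G$ being invertible in $M_\ell(B)$. Thus the theorem reduces to showing that non-singularity of $\phi$ forces $G\in M_\ell(B)$ to be invertible.

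To establish this I would pass to the radical quotient $\bar B=B/\Rad B$ and set $\bar G=\pi(G)\in M_\ell(\bar B)$. Taking $w=v_j$ in \eqref{nln}, the space $N_{\bar\phi}(L,i)$ is identified with the kernel of the $\kappa$-linear map $L^{\oplus\ell}\to L^{\oplus\ell}$ sending $(l_t)_t$ to $\big(\sum_t\bar\phi(v_j,v_t)\,l_t\big)_j$, i.e.\ with the kernel of left multiplication by $\bar G$; the condition is visibly independent of $i$. Hence $\phi$ is non-singular exactly when left multiplication by $\bar G$ is injective on $L^{\oplus\ell}$ for every irreducible $\bar B$-module $L$. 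Since $\bar B$ is semisimple, a Wedderburn decomposition turns this into the invertibility of each simple block of $\bar G$, hence into invertibility of $\bar G$ in $M_\ell(\bar B)$, with finite-dimensionality ensuring that injectivity already yields invertibility. Finally, because $\Rad(M_\ell(B))=M_\ell(\Rad B)$ is nilpotent and a square matrix over a finite-dimensional algebra is a unit if and only if its image modulo the radical is, invertibility of $\bar G$ lifts to invertibility of $G$ in $M_\ell(B)$. Combining the steps, $\Psi$ is an isomorphism $A\cong M_\ell(B)$, and $M_\ell(B)\overset{\text{Morita}}{\sim}B$ by the classical Morita theory of matrix algebras, proving the theorem.

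The main obstacle I anticipate is the bookkeeping in the middle step: correctly matching the module-theoretic definition of singularity in \eqref{nln} with the algebraic condition on $G$, and in particular tracking the left/right sidedness, since \eqref{nln} tests \emph{left} multiplication by $\bar G$ whereas surjectivity of $\Psi$ tests \emph{right} multiplication by $G$. Over an Artinian algebra one-sided invertibility of $G$ is automatically two-sided, so the two conditions coincide; the $\sigma$-symmetry $\sigma(\phi(b,c))=\phi(c,b)$ built into the inflation makes $G$ Hermitian and renders this symmetry transparent, but it is the Artinian property that does the essential work.
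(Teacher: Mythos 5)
The paper offers no proof of this theorem: it is imported wholesale from K\"onig--Xi, with only the remark that it ``follows from Corollary~3.5 and Proposition~4.2 in [KX]''. So the comparison here is between your self-contained argument and a bare citation. Your route --- realizing the inflation as the full matrix algebra $M_\ell(B)$ via $\Psi(v_i\otimes v_j\otimes x)=\sum_k x\,\phi(v_j,v_k)E_{ik}$, reducing surjectivity of $\Psi$ to invertibility of the Gram matrix $G$, translating non-singularity into invertibility of $\bar G$ over the semisimple quotient via Wedderburn, and lifting through the nilpotent radical --- is correct and gives more than the cited statement: the isomorphism $A\cong M_\ell(B)$ in particular recovers the fact (noted only implicitly in the paper) that a non-singular inflation is unital, whereas [KX] reach the Morita equivalence through an idempotent/unit analysis of the inflation rather than an explicit matrix realization. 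The computations check out: both sides of the homomorphism identity give the matrix with $(i,b)$-entry $x\phi(v_j,v_k)y\phi(v_l,v_b)$; the identification of $N_{\bar\phi}(L,i)$ with $\ker\bigl(\bar G\colon L^{\oplus\ell}\to L^{\oplus\ell}\bigr)$ matches (\ref{nln}) and is indeed independent of $i$; and your handling of the left/right asymmetry via one-sided units in an Artinian algebra is sound. The only caveat worth recording is that you repeatedly use finite dimensionality of $B$ (surjective endomorphisms of $B^{\oplus\ell}$ are injective, $\Rad M_\ell(B)=M_\ell(\Rad B)$ is nilpotent, units are detected modulo the radical), which the paper's generic phrase ``a $\kappa$-algebra $B$'' does not state explicitly; but Definition~\ref{sing} already presupposes a sensible radical theory, and in every application here $B=\H_{n-2f}$ is finite dimensional, so this is a hypothesis to make explicit rather than a gap.
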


We are going to state our main result on $\B$. Throughout, we assume
that $R$ is a commutative ring  which contains the multiplicative
identity $1_R$ and invertible elements $q$, $r$ and $q-q^{-1}$. We
use $\omega$ instead of $q-q^{-1}$ later on.

\begin{Defn}\cite{BW}\cite{Mu} The Birman-Murakami-Wenzl algebra $\B$ with defining parameters
$r$ and $q$  is the unital associative $R$-algebra generated by
$T_i$, $1\le i<n$ subject to the  relations:
\begin{enumerate}
    \item $(T_i-q) (T_i+q^{-1}) (T_i-r^{-1})=0$, for $1\le i<n$,
\item $T_iT_j=T_jT_i$ if $|i-j|>1$,
\item $T_iT_{i+1}T_i=T_{i+1}T_iT_{i+1}$, for $1\le i<n-1$,
\item $E_iT_i=r^{-1} E_i=T_iE_i$, for $1\le i\le n-1$,
\item $E_iT_j^{\pm} E_i=r^{\pm}E_{i}$, for $j=i\pm 1$,
\end{enumerate}
where $E_i=1-\omega^{-1} (T_i-T_i^{-1})$ for $1\le i\le n-1$.
\end{Defn}

It is known that  there is an $R$-linear anti-involution $\ast:
\B\rightarrow \B$ which fixes  $T_i$. If we denote by $\langle
E_1\rangle$ the two-sided ideal of $\B$ generated by $E_1$, then
$\B/\langle E_1\rangle$ is isomorphic to the  Hecke algebra $\H_n$
associated to the symmetric group $\mathfrak S_n$. We denote by
\begin{equation}\label {epsi} \epsilon_n:   \H_n\rightarrow \B/\langle
E_1\rangle\end{equation} the corresponding isomorphism.

 Note that $\H_n$
is the $R$-algebra with generators $g_i$, $1\le i\le n-1$ subject to
the defining relations
$$\begin{cases} (g_i-q)(g_i+q^{-1})=0, & \text{for $1\le i\le n-1$,}\\
g_ig_j=g_jg_i,  & \text{if $|i-j|>1$,}\\
g_ig_jg_i=g_jg_ig_j, & \text{if $|i-j|=1$.}\\
\end{cases}$$
It is known that there is an $R$-linear involution $\ast$ on $\H_n$
which fixes $g_i, 1\le i\le n-1$.

Recall that the symmetric group $\mathfrak S_n$  in $n$ letters is
the Coxeter group with distinguished generators $s_i, 1\le i\le n-1$
subject to the defining relations  $$\begin{cases}  s_i^2=1, & \text{ if $1\le i\le n-1$,}\\
s_is_j=s_js_i, & \text{ if  $|i-j|>1$,}\\ s_is_js_i=s_js_is_j, &
\text{ if $j=i\pm 1$.}\\
\end{cases}$$
 It is known that $s_i$ can be identified with the basic
transposition $(i, i+1)$. For each $w\in \mathfrak S_n$ with reduced
expression $s_{i_1}\cdots s_{i_k}$,  let $T_w:=T_{i_1}T_{i_2}\cdots
T_{i_k}\in \B$. It is well-known that $T_w$ is independent of a
reduced expression of $w$.

Given a non-negative  integer $f\le \lfloor n/2\rfloor$, let $\B^f$
be the two sided ideal of $\B$ generated by $ E^{f, n}$, where
\begin{equation}\label{effe} E^{f, n}=E_{n-1}E_{n-3}\cdots
E_{n-2f+1}.\end{equation} When $f=0$, we denote $E^{0, n}$ by the
identity of $\B$. It is known that
$$\B=\B^0\supset \B^1\supset \cdots \supset \B^{\lfloor n/2\rfloor} \supset 0$$ is a  filtration of two-sided
ideals of $\B$. Let $$s_{i, j}=\begin{cases} s_{i}s_{i+1}\cdots
s_{j-1}, & \text{if  $i<j$,}\\ s_{i-1}s_{i-2}\cdots s_{j},
&\text{if  $i>j$,}\\ 1, &\text{if $i=j$.}\\
\end{cases}$$
Enyang~\cite{Enyang} proved that  $\B^f/\B^{f+1}$ is free over $R$
with basis $S$ where \begin{equation} \label{bff} S =\{T_u^\ast
E^{f, n} T_w T_v \mod \B^{f+1}\mid  u, v\in D_{f, n}, w\in \mathfrak
S_{n-2f}\},
\end{equation}  and
 \begin{equation}\label{dff}D_{f, n}
=\Set[40]s_{n-2f+1,i_f}s_{n-2f+2,j_f}\cdots s_{n-1,i_1}s_{n,j_1}|
      $1\leq i_f<\cdots<i_1\leq n ;\atop1\leq i_k<j_k\leq n-2k+2;1\leq k\leq f$
      |.\end{equation}

Let $\kappa$ be a field which is an $R$-algebra.   Let $$\mathscr
B_{n, \kappa}=\B \otimes_R \kappa.$$ By abusing notation, we will
use $\B$ instead of $\mathscr B_{n, \kappa}$ in the remaining part
of this paper.

 Via the $\kappa$-basis $S$ for $\B^f/\B^{f+1}$ in (\ref{bff}), we
will prove that $\B^f/\B^{f+1}$ is the inflation of $\H_{n-2f}$
along the vector space $V_f$ spanned by $T_u$, $u\in D_{f, n}$.  We
remark that the corresponding anti-involution $\sigma $ on
$\H_{n-2f}$ is the anti-involution $\ast$ on $\H_n$. Further,
$\sigma$ can be extended to $\B^f/\B^{f+1}$, which is the same as
the anti-involution induced by $\ast$ on $\B $.
 The bilinear form \begin{equation}\label{bi} \phi_f: V_f\otimes V_f\rightarrow
 \H_{n-2f}\end{equation}
can be defined via the multiplication of $\B$. Details will be given
in Proposition~\ref{iso}.

The following definition is motivated by K$\ddot{\text{o}}$nig and
Xi's work on Brauer algebras in \cite{KX}.

\begin{Defn}\label{singp} The defining parameters $r$ and $q$ are said to be
singular if there is a positive integer $f\le \lfloor n/2\rfloor$
such that the bilinear form  $\phi_f$ in (\ref{bi}) is singular in
the sense of Definition~\ref{sing}.
\end{Defn}

The following result follows from Theorem~\ref{main2} and
\cite[Lemma~7.1]{KX}, immediately.
\begin{Prop}
Let $\B$ be the Birman-Murakami-Wenzl algebra over $\kappa$. Then
$$\B \overset {Morita}\sim \oplus_{0\le f\le \lfloor n/2\rfloor}
\H_{n-2f}$$ if the defining parameters $r$ and $q$ are not singular.
\end{Prop}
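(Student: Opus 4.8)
The plan is to exploit the filtration by two-sided ideals
$$\B = \B^0 \supset \B^1 \supset \cdots \supset \B^{\lfloor n/2\rfloor} \supset 0$$
and to treat $\B$ one section at a time. First I would invoke Proposition~\ref{iso}, which identifies each subquotient $\B^f/\B^{f+1}$ with the inflation of $\H_{n-2f}$ along the vector space $V_f$ spanned by $\{T_u \mid u \in D_{f,n}\}$, equipped with the bilinear form $\phi_f$ of (\ref{bi}). The hypothesis that $r$ and $q$ are not singular unwinds, through Definition~\ref{singp}, into the assertion that \emph{every} form $\phi_f$ with $1 \le f \le \lfloor n/2\rfloor$ is non-singular in the sense of Definition~\ref{sing}. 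The top section $f=0$ needs no argument: since $D_{0,n}=\{1\}$ the space $V_0$ is one-dimensional and $\B^0/\B^1 \cong \H_n$ on the nose.

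With the sections described in this way, I would apply Theorem~\ref{main2} to each value of $f$. Non-singularity of $\phi_f$ gives a Morita equivalence
$$\B^f/\B^{f+1} = V_f \otimes V_f \otimes \H_{n-2f} \overset{Morita}\sim \H_{n-2f},$$
so that, section by section, the associated graded of the filtration is Morita equivalent to $\oplus_{0\le f\le \lfloor n/2\rfloor}\H_{n-2f}$. It then remains to transfer this section-wise information to $\B$ itself, and this is exactly the role of \cite[Lemma~7.1]{KX}: it provides the mechanism for upgrading the Morita equivalences of the sections of such an ideal filtration, each arising from a non-singular inflation, to a single Morita equivalence between the ambient algebra and the direct sum of the base algebras. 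Combining the two yields $\B \overset{Morita}\sim \oplus_{0\le f\le \lfloor n/2\rfloor}\H_{n-2f}$.

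The main obstacle is concentrated entirely in this final transfer rather than in the individual sections. The subtle point is that $\B^f/\B^{f+1}$ is a subquotient of $\B$, not a direct summand, so the Morita contexts furnished by Theorem~\ref{main2} at the various layers must be shown to glue compatibly along the filtration; one has to track idempotents through the ideals $\B^f$ and keep control of the Jacobson radical contributed by the inflation data, bearing in mind that an inflation need not even be unital, as noted after (\ref{mulin}). The force of \cite[Lemma~7.1]{KX} is precisely that once each $\phi_f$ is non-singular these compatibility questions are resolved, so under our standing hypothesis the proposition follows at once.
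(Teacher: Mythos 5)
Your argument is essentially the paper's own: the paper states that the proposition follows immediately from Theorem~\ref{main2} together with \cite[Lemma~7.1]{KX}, and your proposal simply makes explicit the same two ingredients --- non-singularity of each $\phi_f$ giving $\B^f/\B^{f+1}\overset{Morita}\sim\H_{n-2f}$ via Theorem~\ref{main2}, and \cite[Lemma~7.1]{KX} assembling these along the ideal filtration. Your identification of where the real content lies (the transfer from the associated graded sections to $\B$ itself, handled by the cited lemma of K\"onig--Xi) is accurate, so the proposal is correct and matches the paper's route.
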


Throughout, we denote by  $e$ the multiplicative order of $q^2$ if
$q^2$ is a root of unity. Otherwise, we define $e=\infty$.

The following result, which is the  main result of this paper, gives
the classification of singular parameters $r$ and $q$ for $\B$ over
an arbitrary field $\kappa$. We define
 \begin{equation}\label{sss}
\mathcal S=\cup_{k=3}^n \{q^{3-2k},\pm q^{3-k},-q^{2k-3},\pm
q^{k-3}\}, \text{for $n\ge 2$.}\end{equation}

\begin{Theorem}\label{main} Let $\B$ be the Birman-Murakami-Wenzl algebra over the field  $\kappa$.
\begin{enumerate}\item Suppose $e>n-2$. \begin{itemize} \item[(1)]
If $r\not\in \{q^{-1}, -q\}$, then   $r$ and $q$ are  singular if
and only if  $r\in  \mathcal S$,
\item [(2)]  If $r\in \{q^{-1}, -q\}$, then $r$ and $q$ are singular if
and only if one of the following conditions holds:
\begin{enumerate}\item $n$ is even or odd with $n\ge 7$, \item
$n=3$, and  $q^4+1= 0$.
\item $n=5$, and $2(q^4+1) (q^6+1)(q^8+1)=0$.\end{enumerate}\end{itemize}
\item If $e\le n-2$, then  $r$ and $q$ are singular if and only
if $r\in \{ q^a, -q^b\mid a, b\in \mathbb Z\}$.
 \end{enumerate}
 \end{Theorem}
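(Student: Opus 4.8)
The plan is to translate the singularity of each form $\phi_f$ into the vanishing of an explicit determinant, and then to read off the answer from the Gram determinant formulae for the cell modules of $\B$ recalled in Section~3. By Definition~\ref{singp}, the parameters $r$ and $q$ are singular exactly when $\phi_f$ is singular for some $f$ with $1\le f\le\lfloor n/2\rfloor$, and by Definition~\ref{sing} this means $N_{\bar\phi_f}(L,i)\neq0$ for some irreducible $\H_{n-2f}$-module $L=D^\mu$ and some basis vector $v_i$ of $V_f$. First I would unwind this condition: by Proposition~\ref{iso} and (\ref{nln}), $N_{\bar\phi_f}(D^\mu,i)$ is the kernel of the $\kappa$-linear map $V_f\otimes D^\mu\to\operatorname{Hom}_\kappa(V_f,D^\mu)$ sending $\sum_u v_u\otimes x_u$ to $w\mapsto\sum_u\bar\phi_f(w,v_u)x_u$. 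Since the invariant form $\langle\,,\,\rangle_\mu$ on $D^\mu$ is non-degenerate, this kernel is nonzero if and only if the symmetric form
\begin{equation*}
\Big(\sum_u v_u\otimes x_u,\ \sum_v v_v\otimes y_v\Big)\longmapsto \sum_{u,v}\big\langle\bar\phi_f(v_v,v_u)\,x_u,\ y_v\big\rangle_\mu
\end{equation*}
on $V_f\otimes D^\mu$ is degenerate; writing $G_{f,\mu}$ for its Gram matrix, singularity of $\phi_f$ becomes the condition $\det G_{f,\mu}=0$ for some admissible $(f,\mu)$.

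The next step is to identify $\det G_{f,\mu}$ with a factor of a Gram determinant of $\B$. The cell module of $\B$ labelled by $(f,\mu)$ is $V_f\otimes S^\mu$, carrying the analogous form built from $\phi_f$; its Gram determinant, computed in \cite{RS:bmw} and recalled in Section~3, factors as the Gram determinant of $S^\mu$ over $\H_{n-2f}$ times a determinant whose image under the projection $\pi$ killing $\Rad\H_{n-2f}$ is $\det G_{f,\mu}$. The latter factor is, explicitly, a product of linear factors of the form $r-q^a$ and $r+q^b$, whose exponents are governed by the contents of the $2f$ boxes removed in passing from $n$ to $n-2f$. Its zeros in $r$ are the candidate singular parameters, and it is here that the explicit formulae of \cite{RS:bmw} together with the block results of \cite{RS2} carry the computation.

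For statement~(1), assume $e>n-2$. Then every $\H_{n-2f}$ with $f\ge1$ satisfies $n-2f\le n-2<e$ and is therefore semisimple, so $\Rad\H_{n-2f}=0$, $\bar\phi_f=\phi_f$, and $D^\mu=S^\mu$ carries a non-degenerate form; the singularity of $\phi_f$ is then governed purely by the $r$-linear factors. Sorting these over all admissible $(f,\mu)$ shows that, when $r\notin\{q^{-1},-q\}$, some factor vanishes if and only if $r$ lies in the set $\mathcal S$ of (\ref{sss}), which is subcase~(1). The main obstacle is subcase~(2), where $r\in\{q^{-1},-q\}$: here the defining cubic $(T_i-q)(T_i+q^{-1})(T_i-r^{-1})$ has a repeated root, so $\B$ degenerates, and several linear factors collapse or cancel against factors arising from the $E_i$; the determinants must then be recomputed directly for each parity of $n$. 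This recomputation yields the uniform answer for $n$ even and for odd $n\ge7$, together with the sporadic exceptions at $n=3$ (forcing $q^4+1=0$) and $n=5$ (forcing $2(q^4+1)(q^6+1)(q^8+1)=0$), which I would verify by evaluating $G_{f,\mu}$ explicitly for the few $\mu$ that occur.

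Finally, for statement~(2), assume $e\le n-2$. Now the Hecke algebras $\H_{n-2f}$ need no longer be semisimple, so $N_{\bar\phi_f}$ receives contributions both from the $r$-linear factors and from the degeneracies of the forms $\langle\,,\,\rangle_\mu$. Since $q^2$ has finite order $e\le n-2$, the exponents occurring in $r-q^a$ and $r+q^b$ sweep out a complete set of residues modulo $e$, so that every element of $\{q^a,-q^b\mid a,b\in\Z\}$ is realised as a zero of some factor for a suitable $(f,\mu)$; conversely, since all factors have this shape, no other value of $r$ can be singular. This gives the stated criterion. Throughout, the decisive difficulties are the cancellations in the degenerate-cubic case and the small-$n$ verifications, for which the explicit Gram determinants of \cite{RS:bmw} are indispensable.
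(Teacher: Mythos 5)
Your treatment of part (1) follows the paper's route: when $e>n-2$ every $\H_{n-2f}$ with $f\ge 1$ is semisimple, so $D^\lambda=S^\lambda$, Proposition~\ref{tool}(d) identifies $N_{\phi_f}(D^\lambda,\ell)$ with $\Rad\Delta(f,\lambda)$, and singularity becomes the vanishing of some $\det G_{f,\lambda}$ with $f>0$, which is then read off from the explicit determinants of \cite{RS:bmw}. That is exactly Proposition~\ref{B1} of the paper, and your plan to check the sporadic $n=3,5$ cases by direct evaluation matches what is done there.

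Part (2), however, has a genuine gap. The whole mechanism you use --- ``singular iff some $r$-linear factor of a cell-module Gram determinant vanishes'' --- depends on the identification $N_{\bar\phi_f}(D^\lambda,\ell)\cong\Rad\Delta(f,\lambda)$, which Proposition~\ref{tool}(d) gives \emph{only} when $D^\lambda=S^\lambda$. When $e\le n-2$ the Hecke algebras are not semisimple, $\bar\phi_f$ is $\phi_f$ composed with the projection killing $\Rad\H_{n-2f}$, and the singularity condition lives on the simple heads $D^\lambda$, not on $S^\lambda$; a vanishing of $\det G_{f,\lambda}$ may be caused entirely by $\Rad S^\lambda\ne 0$ without $\phi_f$ being singular, and conversely $N_{\bar\phi_f}(D^\lambda,\ell)\ne 0$ is not detected by any factorization of $\det G_{f,\lambda}$. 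Consequently your ``only if'' direction (``since all factors have this shape, no other value of $r$ can be singular'') does not follow: the paper has to prove this (Proposition~\ref{B2}) by taking a composition factor $D^{\ell,\mu}$ of $N_{\phi_f}(D^\lambda,\ell)$, using the exact functors $\mathcal F,\mathcal G$ and a filtration lemma to reduce to $\ell=0$ and $f=1$ by induction on $n$, and then applying the central element $\prod_i L_i$ of Jucys--Murphy elements to both modules to force $\prod_{p\in[\lambda]}c(p)=\prod_{p\in[\mu]}c(p)$, whence $r=\pm q^a$. Your ``if'' direction is likewise incomplete: asserting that the exponents ``sweep out a complete set of residues modulo $e$'' does not produce a witness, because one must exhibit, for each $r=\pm q^a$, a specific $(f,\lambda)$ with $f>0$ for which $S^\lambda=D^\lambda$ (so that Theorem~\ref{BMW1}(b),(c) hold and Proposition~\ref{tool}(d) applies) together with an $(f-\ell,\lambda)$-admissible $\mu$ making $\det G_{f,\lambda}=0$; this is the lengthy case-by-case construction of Proposition~\ref{B3}, and it is not automatic that such $e$-restricted $\lambda$ with the required hook-length conditions exist for every residue class.
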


Recall that Morton and Wassermann proved that $\B$ is isomorphic the
Kauffman tangle algebra~\cite{MW}.  Morton and Wassermann defined
$\B$ over the commutative ring $\mathbb Z[r^\pm, \omega,
\delta]/\langle  r-r^{-1}-\omega (\delta-1)\rangle $. In their
definition, they do not need the invertibility of $\omega$. By
specializing $\omega$ and $r$ to $0$ and $1$, respectively, they
proved that Kauffman tangle algebra is isomorphic to the Brauer
algebra $B_n(\delta)$~\cite{B}, which is the associative algebra
over $\mathbb Z[\delta]$ generated by $s_i, e_i$, $1\le i\le n-1$
subject to the following relations:
\bigskip
 {\small
\begin{multicols}{2}
\begin{enumerate}
    \item
$s_i^2=1$, for $1\le i<n$.
\item $s_is_j=s_js_i$ if $|i-j|>1$, \item
$s_is_{i+1}s_i=s_{i+1}s_is_{i+1}$,\newline for $1\le i<n-1$,
    \item $e_i^2=\delta e_i$, for $1\le i<n$.
\item $s_ie_j=e_js_i$, if $|i-j|>1$, \item $e_ie_j=e_je_i$, if
$|i-j|>1$.
\item $e_is_i=e_i=s_ie_i$,
\newline for $1\le i\le n-1$, \item
$s_ie_{i+1}e_i=s_{i+1}e_i$,\newline
$e_{i+1}e_is_{i+1}=e_{i+1}s_i$,\newline for $1\le i\le n-2$.
    \item
        $e_{i+1}e_ie_{i+1}=e_{i+1}$ and\newline
        $e_ie_{i+1}e_i=e_i$, for $1\le i\le n-2$.
\end{enumerate}
\end{multicols}}
\bigskip

 Note that the  relationships between Morton--Wassermann's  notations and our
notations are $$\omega=q-q^{-1}, \text{  and  }
\delta=\frac{(q+r)(qr-1)}{r(q+1)(q-1)}.$$
 Therefore, the Brauer
algebra can be obtained from $\B$ by specializing $q, r$ to $1$.

We have $\text{limit}_{q\to 1} \delta\in \mathcal Z$ if $r\in
\mathcal S$  where \begin{equation}\label{zzz} \mathcal Z=\{1, 2,
\cdots, n-2\}\cup\{-2, -4,\cdots, 4-2n\}\cup\{-1,-2, \cdots,
4-n\}.\end{equation} If $r=\pm q^a$, we have $\text{limit}_{q\to 1}
\delta\in \mathbb Z$. Therefore, by Theorem~\ref{main}, we have the
following result immediately.

\begin{Theorem}\label{main1}  Let $\W$ be the Brauer algebra over
the field  $\kappa$. Let $p=\Char \kappa$ if $\Char \kappa>0$ and
let $p=\infty$ otherwise.
\begin{enumerate} \item Suppose $p>n-2$. \begin{enumerate} \item  If
$\delta\neq 0$, then  $\delta$ is singular if and only if
$\delta=a\cdot 1_\kappa$ and $a\in \mathcal Z$,
\item If $\delta=0$, then $\delta$ is singular if and only if one of the following conditions holds:
\begin{itemize}\item[(1)] $n$ is either even or odd with $n>7$,
\item [(2)] $n=3$ and $p= 2$. \end{itemize}
\end{enumerate}
\item Suppose $p\le n-2$. Then $\delta $ is singular if and only if
$\delta =a \cdot 1_\kappa$ and  $a\in \mathbb Z$.
\end{enumerate}
\end{Theorem}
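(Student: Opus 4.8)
The plan is to deduce Theorem~\ref{main1} from Theorem~\ref{main} by passing to the classical limit $q,r\to 1$, under which $\B$ degenerates to the Brauer algebra $\W$ and the defining parameter becomes $\delta=\lim_{q\to 1}\frac{(q+r)(qr-1)}{r(q+1)(q-1)}$. The dictionary I would set up is: the quantum characteristic $e$, the multiplicative order of $q^2$, is replaced in the limit by the ordinary characteristic $p$, so that the two regimes $e>n-2$ and $e\le n-2$ of Theorem~\ref{main} become $p>n-2$ and $p\le n-2$; and a parameter pair $(q,r)$ that is singular for $\B$ specializes to a singular $\delta$ for $\W$. Concretely, I would let $r$ run over the one-parameter families $r=\pm q^{m}$ appearing in $\mathcal S$ and track the limit of the bilinear forms $\phi_f$ of Proposition~\ref{iso} as $q\to 1$.

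The quantitative core is the limit of $\delta$ along these families. Using $\tfrac{q^{j}-1}{q^{2}-1}\to \tfrac{j}{2}$ one finds $\lim_{q\to1}\delta=m+1$ when $r=q^{m}$ and $\lim_{q\to1}\delta=1-m$ when $r=-q^{m}$. Feeding in the exponents that define $\mathcal S$ for $3\le k\le n$, the pairs $q^{3-2k},-q^{2k-3}$ both give $4-2k$, the pairs $q^{k-3},-q^{3-k}$ both give $k-2$, and the pairs $q^{3-k},-q^{k-3}$ both give $4-k$. Hence the image of $\mathcal S$ under $\lim_{q\to1}\delta$ is exactly $\mathcal Z\cup\{0\}$, and the value $0$ is attained precisely at $r=q^{-1}$ (from $q^{3-k}$ with $k=4$) and $r=-q$ (from $-q^{k-3}$ with $k=4$). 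This reproduces the dichotomy of Theorem~\ref{main}: the subcase $r\notin\{q^{-1},-q\}$, singular iff $r\in\mathcal S$, becomes ``$\delta\ne 0$, singular iff $\delta=a\cdot 1_\kappa$ with $a\in\mathcal Z$'', while the subcase $r\in\{q^{-1},-q\}$ becomes the degenerate case $\delta=0$. The same computation settles part~(2): $r\in\{q^{a},-q^{b}\}$ yields $\lim_{q\to1}\delta\in\mathbb Z$ with every integer attained, giving ``$\delta=a\cdot 1_\kappa$, $a\in\mathbb Z$'' in the regime $p\le n-2$.

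It then remains to convert the numerical conditions governing the singularity of $r\in\{q^{-1},-q\}$ into the stated conditions on $n$ and $p$ for $\delta=0$. Working in the regime $p>n-2$, I would evaluate the relevant factors at $q=1$: since $q^{4}+1\to 2$, the $n=3$ condition $q^{4}+1=0$ becomes $p=2$, which is permitted because here $p>n-2=1$; and for $n=5$ the product $2(q^{4}+1)(q^{6}+1)(q^{8}+1)$ tends to $2^{4}$, which is nonzero once $p>3=n-2$, so $n=5$ drops out of the singular list, in agreement with its absence from Theorem~\ref{main1}. The cases $n$ even and $n$ odd would then be read off directly.

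The main obstacle is not the arithmetic but justifying that singularity transfers correctly under the classical limit, and in particular reconciling the boundary $n\ge 7$ of Theorem~\ref{main} with $n>7$ in Theorem~\ref{main1}. The difficulty is that $\bar\phi_f=\pi\circ\phi_f$ takes values in $\H_{n-2f}/\Rad\H_{n-2f}$, and both this Hecke algebra and its radical behave discontinuously as $q\to1$: for generic $q$ (the $e$-large regime) $\H_{n-2f}$ is semisimple, whereas at $q=1$ it degenerates to $\kappa\Sym_{n-2f}$, whose radical depends on $p$. Consequently the rank of $\bar\phi_f$ can jump in the limit, so singularity may be gained or lost at $q=1$; the $n=7$ discrepancy is precisely such a jump. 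The delicate part of the argument is therefore to compute the limiting Brauer forms $\bar\phi_f$ directly — equivalently, to specialize the explicit Gram determinant formulas underlying Theorem~\ref{main} — and to check the exceptional small values $n\in\{3,5,7\}$ by hand rather than by naive substitution.
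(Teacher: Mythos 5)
Your proposal follows the same route as the paper: Theorem~\ref{main1} is obtained from Theorem~\ref{main} by the specialization $q,r\to 1$, and your computation that $\lim_{q\to 1}\delta$ sends $\mathcal S\setminus\{q^{-1},-q\}$ onto $\mathcal Z$ and $\pm q^a$ into $\mathbb Z$, together with the evaluation at $q=1$ of the polynomial conditions governing $n=3$ and $n=5$, is exactly the content of the paragraphs surrounding the theorem (the paper records the $n=5$ exception explicitly via $\lim_{q\to 1}2(q^4+1)(q^6+1)(q^8+1)=2^4$). The caveat you raise --- that singularity need not transfer under the degeneration, so the exceptional small cases and in particular the $n\ge 7$ versus $n>7$ boundary require a direct Brauer-algebra verification rather than naive substitution --- is likewise conceded by the paper, which remarks at the end of Section~2 that a detailed proof must rerun the whole argument using the Brauer-algebra analogues of Theorem~\ref{simple}, Definition~\ref{adm}, Theorem~\ref{BMW1} and Lemma~\ref{functor}.
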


By comparing Theorems~\ref{main}-\ref{main1}, we find that $0$ is
not singular for $B_5(0)$ if $p>3$.  The reason is that
$\text{limit}_{q\to 1} 2(q^4+1)(q^6+1)(q^8+1)\neq 0 $ in $\kappa$ if
$p>3$.

 K$\ddot{\text{o}}$nig and Xi  pointed out  that the singular
parameter $\delta $ is dependent of $\delta$ only~\cite[p1502]{KX}.
This is not the same as  our Theorem~\ref{main1}. One can find the
counter-example by considering the cell module $\Delta(1, (1))$ when
$p=2$ and  $\delta=0$.

Finally, we remark that  Theorem~\ref{main1} can be proved by
similar arguments for $\B$. In order to give the detailed proof, we
need results which are similar to Theorem~\ref{simple},
Definition~\ref{adm}, Theorem~\ref{BMW1} and Lemma~\ref{functor}
etc, which can found in \cite{CMW, GL, Na,  RS3, RS4} etc. We leave
the details to the reader.

\section{Representations of Birman-Murakami-Wenzl algebras}
In this section, we recall some results on the representations of
$\B$ over a field . We will use them to prove Theorem~\ref{main} in
section~4. We start by recalling some combinatorics.

Recall that a \textsf{partition} of $n$ is a weakly decreasing
sequence  of non--negative integers
$\lambda=(\lambda_1,\lambda_2,\dots)$ such that
$|\lambda|:=\lambda_1+\lambda_2+\cdots=n$. In this case, we write
$\lambda\vdash n$. Let $\Lambda^+(n)=\{\lambda\mid \lambda\vdash
n\}$. Then $\Lambda^+(n)$ is  a poset with dominance order
$\trianglelefteq$ as the partial order on it. More explicitly,
$\lambda\trianglelefteq \mu$ for $\lambda, \mu\in \Lambda^+(n)$  if
$ \sum_{j=1}^i \lambda_j\le \sum_{j=1}^i \mu_j$ for all possible
$i$. Write $\lambda\vartriangleleft \mu$ if $\lambda\trianglelefteq
\mu$ and $\lambda\ne \mu$.

Suppose that  $\lambda$ and $\mu$ are two partitions. We say that
$\mu$ is obtained from $\lambda$ by \textsf{adding} a box if there
exists an $i$  such that $\mu_i=\lambda_i+1$ and $\mu_j=\lambda_j$
for $j\neq i$. In this situation we will also say that $\lambda$ is
obtained from $\mu$ by \textsf{removing} a box and we write
$\lambda\rightarrow\mu$ and $\mu\setminus\lambda=(i,\lambda_i+1)$.
We will  say that the pair  $(i,\lambda_i+1)$ is an \textsf{addable}
node of $\lambda$ and a \textsf{removable} node of $\mu$. Note that
$|\mu|=|\lambda|+1$.

The Young diagram $[\lambda]$ for a partition $\lambda=(\lambda_1,
\lambda_2, \cdots)$ is a collection of boxes arranged in
left-justified rows with $\lambda_i$ boxes in the $i$-th row of
$[\lambda]$. A $\lambda$-tableau $\ss$ is obtained by inserting $i,
1\le i\le n$ into $[\lambda]$ without repetition. The symmetric
group $\mathfrak S_n$ acts on $\ss$ by permuting its entries. Let
$\ts^\lambda$ be the $\lambda$-tableau obtained from the Young
diagram $Y(\lambda)$ by adding $1, 2, \cdots, n$ from left to right
along the rows. For example, for $\lambda=(4,3,1)$,
$$\ts^{\lambda}=\young(1234,567,8).$$

If $\ts^\lambda w=\ss$, write $w=d(\ss)$. Note that $d(\ss)$ is
uniquely determined by $\ss$.

A $\lambda$-tableau $\ss$ is standard if the entries in $\ss$ are
increasing both from left to right in each row and from top to
bottom in each column. Let $\Std_n(\lambda)$ be the set of all
standard $\lambda$-tableaux.

For $\lambda\vdash n-2f$, let $\mathfrak S_{\lambda}$ be the Young
subgroup of $\mathfrak S_{n-2f}$ generated by $s_j$, $1\le j\le
n-2f-1$ and $j\neq \sum_{k=1}^i \lambda_k$ for all possible $i$.

Let $\Lambda_n=\set{(f, \lambda)\mid \lambda\vdash n-2f, 0\le f\le
\lfloor\frac n 2\rfloor}$. Given  $(k, \lambda),  (f, \mu)\in
\Lambda_n$, define  $(k, \lambda)\trianglelefteq (f, \mu)$ if either
$k<f$ or $k=f$ and $\lambda\trianglelefteq\mu$. Write $(k,
\lambda)\lhd(f, \mu)$, if  $(k, \lambda)\unlhd (f, \mu)$ and $(k,
\lambda)\ne (f, \mu)$.

 Let $I(f, \lambda)=\Std_n(\lambda)\times D_{f, n}$ where
 $D_{f, n}$ is defined in (\ref{dff}). Define
\begin{equation}\label{basise}
 C_{(\ss, u) (\ts, v)}^{(f, \lambda)} =T_u^\ast T_{d(\ss)}^{\ast}
  \mathfrak M_\lambda
T_{d(\ts)}T_v,\quad (\ss, u), (\ts, v)\in I(f, \lambda)
\end{equation}
where $\mathfrak M_\lambda=E^{f, n} X_{\lambda}$, $E^{f, n}=E_{n-1}
E_{n-3}\cdots E_{n-2f+1}$, $X_\lambda=\sum_{w\in \mathfrak
S_\lambda} q^{l(w)}T_ w$, and $l(w)$,  the length of $w\in \mathfrak
S_n$.

\begin{Theorem}\cite{Enyang} \label{cell} Let $\B$ be the Birman-Murakami-Wenzl
algebra over  $R$. Let $\ast : \B\rightarrow \B$ be the $R$-linear
anti-involution which fixes $T_i, 1\le i\le n-1$. Then
\begin{enumerate}
\item $\mathscr C_n =\left\{ C_{(\ss, u) (\ts, v)}^{(f, \lambda)} \mid (\ss, u), (\ts, v)\in
I(f, \lambda), \lambda\vdash n-2f,  0\le f\le \lfloor\frac
n2\rfloor\right\}$ is a free $R$--basis of $\B$.
\item $\ast (C_{(\ss, u) (\ts, v)}^{(f, \lambda)})=C_{(\ts, v) (\ss,
u)}^{(f, \lambda)}$.
\item For any $h\in \B$,
$$h \cdot C_{(\ss, u) (\ts, v)}^{(f, \lambda)}  \equiv \sum_{(\us, w)\in I(f, \lambda)} a_{\us,
w} C_{(\us, w) (\ts, v)}^{(f, \lambda)} \mod \B^{\vartriangleright
(f, \lambda)}$$
 where
$\B^{\vartriangleright (f, \lambda)}$ is the free $R$-submodule
generated by $ C_{(\tilde \s, \tilde u) (\tilde \ts, \tilde v)}^{(k,
\mu)}$ with $(k, \mu) \vartriangleright (f, \lambda)$ and $(\tilde
\ss, \tilde u), (\tilde \ts, \tilde v)\in I(k, \mu)$.
 Moreover, each coefficient $a_{\us, w}$ is independent of $(\ts,
v)$.
\end{enumerate}
\end{Theorem}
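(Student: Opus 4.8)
The plan is to establish the three assertions separately: (1) by a dimension count against Enyang's filtration, (2) by a direct computation with $\ast$, and (3) by reducing to the generators and importing the cellularity of the Hecke algebra through the inflation structure of $\B^f/\B^{f+1}$. For (1), I would work one layer of the filtration $\B=\B^0\supseteq\B^1\supseteq\cdots$ at a time. Fix $f$. Since $E^{f,n}$ commutes with $\H_{n-2f}=\langle T_1,\dots,T_{n-2f-1}\rangle$ (the indices occurring in $E^{f,n}$ differ from those of $\H_{n-2f}$ by at least $2$), one may rewrite $C_{(\ss,u)(\ts,v)}^{(f,\lambda)}=T_u^\ast E^{f,n}m_{\ss\ts}T_v$, where $m_{\ss\ts}=T_{d(\ss)}^\ast X_\lambda T_{d(\ts)}$ is the Murphy basis element of $\H_{n-2f}$. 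The Murphy basis $\{m_{\ss\ts}\}$ and the standard basis $\{T_w:w\in\mathfrak S_{n-2f}\}$ of $\H_{n-2f}$ are related by an $R$-invertible transition matrix (unitriangular with respect to dominance). Applying $T_u^\ast E^{f,n}(-)T_v$ to this change of basis for each fixed pair $u,v\in D_{f,n}$ shows that $\{C_{(\ss,u)(\ts,v)}^{(f,\lambda)}\bmod\B^{f+1}\}$ spans the same $R$-module as Enyang's basis $S$ of (\ref{bff}); since the two sets also share the cardinality $|D_{f,n}|^2(n-2f)!=|D_{f,n}|^2\sum_{\lambda\vdash n-2f}|\Std_n(\lambda)|^2$, it is itself a free basis of $\B^f/\B^{f+1}$. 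Running over all $f$ and invoking the standard lemma that a filtration with free quotients yields a free module whose basis is the union of lifts of the layer bases gives (1).

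For (2), since $\ast$ is an anti-involution fixing each $T_i$, it fixes each $E_i=1-\omega^{-1}(T_i-T_i^{-1})$ and hence, because the factors of $E^{f,n}$ commute, fixes $E^{f,n}$; it also fixes $X_\lambda$ because $\mathfrak S_\lambda$ is closed under inversion and $l(w)=l(w^{-1})$. As $E^{f,n}$ commutes with $X_\lambda$, we obtain $\ast(\mathfrak{M}_\lambda)=\ast(X_\lambda)\ast(E^{f,n})=\mathfrak{M}_\lambda$. Applying $\ast$ to (\ref{basise}) and reversing the order of the factors then yields $\ast(C_{(\ss,u)(\ts,v)}^{(f,\lambda)})=T_v^\ast T_{d(\ts)}^\ast\mathfrak{M}_\lambda T_{d(\ss)}T_u=C_{(\ts,v)(\ss,u)}^{(f,\lambda)}$, which is exactly (2).

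For (3), I would first note that the property ``(3) holds for $h$'' is stable under $R$-linear combinations and, once $\B^{\vartriangleright(f,\lambda)}$ is known to be a left ideal, under products; so it suffices to verify it for the generators $h=T_i$. Two sources produce the asserted triangularity. First, the inflation description $\B^f/\B^{f+1}\cong V_f\otimes V_f\otimes\H_{n-2f}$ of Proposition~\ref{iso} shows that left multiplication never disturbs the second tensor factor, which is precisely the index $v$: writing $C_{(\ss,u)(\ts,v)}^{(f,\lambda)}=\theta_{(\ss,u)}T_{d(\ts)}T_v$ with $\theta_{(\ss,u)}=T_u^\ast E^{f,n}T_{d(\ss)}^\ast X_\lambda$, it is enough to expand $h\,\theta_{(\ss,u)}$ and reattach $T_{d(\ts)}T_v$, so the resulting coefficients are automatically independent of $(\ts,v)$. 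Second, the cellularity of $\H_{n-2f}$ on the Murphy basis supplies the triangularity in the partition $\lambda$ and the independence of the left Hecke index, absorbing into $\B^{\vartriangleright(f,\lambda)}$ every term whose partition strictly dominates $\lambda$.

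The hard part will be the explicit straightening of $h\,\theta_{(\ss,u)}$, that is, of $T_iT_u^\ast E^{f,n}$ modulo $\B^{f+1}$: one must show that multiplying the distinguished representative $u\in D_{f,n}$ on the left by $s_i$ either produces another admissible element of $D_{f,n}$ together with Hecke-algebra factors that can be pushed onto $T_{d(\ss)}^\ast X_\lambda$, or, when the relations $E_iT_i=r^{-1}E_i$ and $E_iT_j^{\pm}E_i=r^{\pm}E_i$ force an extra $E$ to appear, lands in $\B^{f+1}\subseteq\B^{\vartriangleright(f,\lambda)}$. Carrying out this case analysis, together with the parallel verification that $\B^{\vartriangleright(f,\lambda)}$ is a two-sided ideal (so that the reduction to generators and the passage modulo this ideal are legitimate), is where the combinatorics of $D_{f,n}$ in (\ref{dff}) does the real work; the remaining bookkeeping then follows the template of Murphy's argument for the Hecke algebra.
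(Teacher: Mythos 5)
The paper itself offers no proof of this theorem: it is quoted from Enyang \cite{Enyang} (the text immediately afterwards credits Xi with the first cellularity proof and Enyang with this particular basis), so the only meaningful comparison is with Enyang's published argument, whose overall strategy --- lift the Murphy basis of $\H_{n-2f}$ through the filtration $\B=\B^0\supset\B^1\supset\cdots$ using the free basis (\ref{bff}) of each layer --- is essentially the one you outline. Your parts (1) and (2) are sound modulo one point you should make explicit in (1): the products $T_{d(\ss)}^\ast X_\lambda T_{d(\ts)}$ are computed in $\B$, where the $T_w$ do \emph{not} satisfy the Hecke relations (one has $T_i^2=1+\omega T_i-\omega r^{-1}E_i$), so the unitriangular transition matrix between $\{m_{\ss\ts}\}$ and $\{T_w\}$ is only available after you check that every correction term carries a factor $E_j$ with $j\le n-2f-1$, and that $E^{f,n}E_j$ generates the same two-sided ideal as $E^{f+1,n}$ and hence vanishes in $\B^f/\B^{f+1}$.

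The genuine gap is in (3). Everything you write there is a reduction of the problem, not a solution of it: you reduce to $h=T_i$, observe correctly that the right-hand index $(\ts,v)$ is inert, and then announce that ``the hard part will be the explicit straightening of $T_iT_u^\ast E^{f,n}$ modulo $\B^{f+1}$,'' together with the verification that $\B^{\vartriangleright(f,\lambda)}$ is an ideal. That straightening --- the case analysis comparing $s_iu$ with the distinguished elements of $D_{f,n}$ in (\ref{dff}), including the cases where $T_i$ collides with a factor of $E^{f,n}$ and the relations $E_iT_i=r^{-1}E_i$ and $E_iT_j^{\pm}E_i=r^{\pm}E_i$ ($j=i\pm1$) create new $E$'s --- is the entire content of the theorem; without it one has shown neither that the expansion of $T_i\cdot T_u^\ast E^{f,n}T_{d(\ss)}^\ast X_\lambda$ closes up on the left halves of basis elements modulo higher terms, nor that $\B^{\vartriangleright(f,\lambda)}$ is two-sided. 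Note also that these two facts must be established simultaneously (by induction on the poset $\Lambda_n$), since your reduction to generators, and indeed the very act of working modulo $\B^{\vartriangleright(f,\lambda)}$, already presupposes the ideal property; so the logical order of your argument needs to be inverted rather than merely filled in.
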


Theorem~\ref{cell} shows that $\mathscr C_n $ is a cellular basis of
$\B$ in the sense of \cite{GL}. We remark that Xi~\cite{Xi} first
proved that $\B$ is  cellular in the sense of \cite{GL}. The
cellular basis $\mathscr C_n$ was constructed by Enyang in
\cite{Enyang}.

 In this paper, we will only consider
left modules for $\B$. By  general theory about cellular algebras in
\cite{GL}, we know that,  for each $(f, \lambda)\in \Lambda_n$,
there is a cell module $\Delta(f, \lambda)$ of $\cba{n}$, spanned by
$$\set{  T^\ast _{v} T^\ast_{d(\ts)}\M_\lambda\mod \cba{n}^{\vartriangleright
(f, \lambda)}\mid (\ts, v)\in I(f, \lambda)}.$$ Further, there is an
invariant  form $\phi_{f,\lambda}$ on  $\Delta(f, \lambda)$. Let
$$\Rad\Delta(f, \lambda)
   =\set{x\in\Delta(f, \lambda)|\phi_{f, \lambda}(x,y)=0\text{ for all }
                          y\in\Delta(f, \lambda)}.$$ Then
                          $\Rad\Delta(f, \lambda)$
is a $\B$--submodule of $\Delta(f, \lambda)$. Let $$D^{f,
\lambda}=\Delta(f, \lambda)/\Rad\Delta(f, \lambda).$$

Recall that  $e$  is  the order of $q^2$ if $q^2$ is a root of
unity. Otherwise,  $e=\infty$. We say that the partition $\lambda$
is \textsf{ $e$-restricted} if $\lambda_i-\lambda_{i+1}<e$ for all
possible $i$.

\begin{Theorem}\cite{Xi}\label{simple} Let $\B$ be the Birman-Murakami-Wenzl algebra
over the field $\kappa$ which contains non-zero  parameters $r, q,$
and $q-q^{-1}$.

\begin{enumerate}
\item Suppose $ r \not\in \{q^{-1}, -q\}$. The non-isomorphic irreducible
$\B$-modules are indexed by $(f, \lambda)$ and $\lambda$ is
$e$-restricted.  \item Suppose $ r \in \{q^{-1},
-q\}$.\begin{enumerate}
\item If $n$ is odd, then the non-isomorphic irreducible
$\B$-modules are indexed by $(f, \lambda)$, $0\le f\le \lfloor
n/2\rfloor$. $\lambda\in \Lambda^+(n-2f)$ and $\lambda$ is
$e$-restricted.
\item If $n$ is even, then the non-isomorphic irreducible
$\B$-modules are indexed by $(f, \lambda)$, $0\le f<\lfloor
n/2\rfloor$. $\lambda\in \Lambda^+(n-2f)$,  $\lambda$ is
$e$-restricted.
\end{enumerate}
\end{enumerate}
\end{Theorem}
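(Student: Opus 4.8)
The plan is to deduce Theorem~\ref{simple} from the cellularity of $\B$ recorded in Theorem~\ref{cell} together with the corresponding classification of simple modules for the Hecke algebra. By \cite{GL}, once $\B$ is cellular with cell modules $\Delta(f,\lambda)$ carrying invariant forms $\phi_{f,\lambda}$, the quotients $D^{f,\lambda}=\Delta(f,\lambda)/\Rad\Delta(f,\lambda)$ that are nonzero form a complete set of pairwise non-isomorphic irreducible $\B$-modules, and $D^{f,\lambda}\neq 0$ if and only if $\phi_{f,\lambda}\not\equiv 0$. Thus the whole statement reduces to determining the pairs $(f,\lambda)\in\Lambda_n$ for which the Gram matrix of $\phi_{f,\lambda}$ is nonzero. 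I would record at the outset that $E_i^2=\delta E_i$ with $\delta=1+(r-r^{-1})/(q-q^{-1})$; a direct computation shows $\delta=0$ precisely when $r\in\{q^{-1},-q\}$, which is exactly the dichotomy separating the two cases of the theorem.

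Next I would make $\phi_{f,\lambda}$ explicit on the cellular basis $C^{(f,\lambda)}_{(\ss,u)(\ts,v)}=T_u^\ast T_{d(\ss)}^\ast\M_\lambda T_{d(\ts)}T_v$, exploiting the ``half-diagram'' combinatorics carried by the index set $D_{f,n}$ of (\ref{dff}). Since $\M_\lambda=E^{f,n}X_\lambda$, multiplying two basis elements and reducing modulo $\B^{\vartriangleright(f,\lambda)}$ splits into two independent contributions: the $E^{f,n}$-part, which by the relations $E_i^2=\delta E_i$ and $E_iT_{i\pm1}^{\pm}E_i=r^{\pm}E_i$ contracts the $f$ arcs of $u$ against those of $v$ and returns a monomial scalar (a power of $\delta$ times a power of $r$) whose $\delta$-exponent equals the number of closed loops created by stacking the two half-diagrams; and the $X_\lambda$-part, which reproduces the invariant form on the Specht module $S^\lambda$ of $\H_{n-2f}$ built from $X_\lambda=\sum_{w\in\mathfrak S_\lambda}q^{l(w)}T_w$. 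Hence each entry of the Gram matrix of $\phi_{f,\lambda}$ factors as (arc scalar)\,$\times$\,(a value of the Hecke form on $S^\lambda$), and by the cellularity of $\H_{n-2f}$ and the Dipper--James classification the Hecke form is nonzero exactly when $\lambda$ is $e$-restricted.

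For part (1), where $\delta\neq 0$, the diagonal entry attached to $u=v$ contributes the arc scalar $\delta^{f}\neq 0$, so $\phi_{f,\lambda}\not\equiv 0$ if and only if the Hecke form on $S^\lambda$ is nonzero, i.e.\ $\lambda$ is $e$-restricted; this holds uniformly in $f$ and yields (1), including the top layer $(n/2,\emptyset)$ for $n$ even since $\emptyset$ is vacuously $e$-restricted. For part (2), where $\delta=0$, the diagonal scalar $\delta^{f}$ vanishes, and the decisive question is whether an off-diagonal entry survives. Here a through-strand is crucial: whenever $n-2f\geq 1$ (equivalently $\lambda$ has at least one box) one can choose two half-diagrams in $D_{f,n}$ whose stacking creates no closed loop, so the corresponding arc scalar is a power of $r$ (coming from $E_iT_{i\pm1}^{\pm}E_i=r^{\pm}E_i$) rather than a power of $\delta$, and the entry reduces to a nonzero value of the Hecke form precisely when $\lambda$ is $e$-restricted. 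The only excluded pair is $n$ even with $f=n/2$ and $\lambda=\emptyset$: here there is no through-strand, every stacking of two perfect matchings closes at least one loop, so every Gram entry is a positive power of $\delta=0$, whence $\phi_{n/2,\emptyset}\equiv 0$ and $D^{n/2,\emptyset}=0$. This produces exactly the even/odd split of statement (2).

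The main obstacle is the combinatorial heart of the $\delta=0$ analysis: proving rigorously, within the $T_u^\ast E^{f,n}T_wT_v$ basis and modulo the higher cell ideal $\B^{\vartriangleright(f,\lambda)}$, that the presence of at least one through-strand forces a loop-free pairing (so the form survives $\delta=0$), while the pure-arc configuration $\lambda=\emptyset$ always produces at least one loop. Controlling how the braid factors $T_w$ and the coset representatives in $D_{f,n}$ interact with the $E_i$-relations, without generating uncontrolled lower-order terms, is where the real work lies; everything else is a formal consequence of cellularity and the Dipper--James classification for $\H_{n-2f}$.
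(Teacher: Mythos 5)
The paper does not actually prove this theorem: it is imported from Xi \cite{Xi}, and the surrounding machinery (Proposition~\ref{iso}, Proposition~\ref{tool}, the form $\phi_f$ of (\ref{phi})) is precisely the iterated-inflation structure your argument rests on. So your route is the standard one, and its skeleton is sound: reduce via \cite{GL} to deciding when $\phi_{f,\lambda}\not\equiv 0$, use $\B^f/\B^{f+1}\cong V_f\otimes V_f\otimes \H_{n-2f}$ to relate $\phi_{f,\lambda}$ to the Hecke form $\phi_\lambda$ on $S^\lambda$, and split on whether $\delta=0$, which is indeed equivalent to $r\in\{q^{-1},-q\}$.

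Two points need tightening before this is a proof. First, the asserted factorization of each Gram entry as (monomial arc scalar) times (a value of the Hecke form) is not literally correct: for a general pair $u,v\in D_{f,n}$ the element $\phi_f(T_u,T_v)$ is an element of $\H_{n-2f}$, not a scalar, and a closed component of the stacked diagram can be knotted or linked with the others, so its skein evaluation is only \emph{divisible} by $\delta$ rather than equal to $\delta^{c}r^{k}$ with $c$ the number of loops. What you actually need, and what is true, is: (i) every entry of $G_{f,\lambda}$ has the form $\phi_\lambda(x_\ss,\bar h\, x_\ts)$ with $\bar h=\phi_f(T_u,T_v)$, so $\phi_\lambda\equiv 0$ forces $\phi_{f,\lambda}\equiv 0$, which is the only-if direction for non-$e$-restricted $\lambda$; (ii) if the stacking has a closed component then $\bar h\in\delta\,\H_{n-2f}$; (iii) if it is loop-free then $\bar h$ is $r^{k}$ times the image of a braid on the through strands, hence \emph{invertible} in $\H_{n-2f}$ --- it is this invertibility, not mere nonvanishing, that lets you choose $\ss,\ts$ with $\phi_\lambda(x_\ss,\bar h\,x_\ts)\neq 0$ whenever $\phi_\lambda\not\equiv 0$. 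Your combinatorial claims are fine: for $n-2f\ge 1$ the matchings $\{1,2\},\dots,\{2f-1,2f\}$ and $\{2,3\},\dots,\{2f,2f+1\}$ stack to a path, hence no loop, while for $n$ even and $f=n/2$ the union of two perfect matchings is a disjoint union of cycles, so every entry lies in $\delta\kappa$. Second, the ``main obstacle'' you flag --- controlling the reduction modulo $\B^{\rhd(f,\lambda)}$ --- is genuine but standard: it is exactly what Morton--Wassermann's tangle-algebra isomorphism \cite{MW} or Enyang's basis computations \cite{Enyang} provide, both of which the paper already invokes. With (i)--(iii) made explicit, your argument does recover the theorem.
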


By general results on cellular algebras in \cite{GL},  $\B$ is
(split) semisimple over $\kappa$ if and only if $D^{f,
\lambda}=\Delta(f, \lambda)$ for all $(f, \lambda)\in \Lambda_n$. We
remark that the authors have given the necessary and sufficient
conditions for $\B$ being seimisimple over an arbitrary
field~\cite{RS:bmw}. However, we will not need this result in this
paper. What we need is the explicit criterion for $\Delta(f,
\lambda)$ being equal to its simple head $D^{f, \lambda}$. In other
words, we give a criterion to
  determine when the Gram determinant $\det G_{f,\lambda}\neq 0$.
   Here $G_{f, \lambda}$ is the Gram matrix associated to the
invariant form $\phi_{f, \lambda}$. We need some combinatorics to
state this result.

For each box $p=(i, j)\in [\lambda]$, define $p^+=(i, j+1)$,
$p^-=(i+1, j)$, and
\begin{equation}\label{lamcont}c_\lambda(p)= r
 q^{2(j-i)}.\end{equation} We will use $c(p)$ instead of
 $c_\lambda(p)$.

 Given two partitions $\lambda$ and $\mu$, we write $\lambda\supset
 \mu$ if $\lambda_i\ge \mu_i$ for all possible $i$. In this case,
 the corresponding skew Young diagram $[\lambda/\mu]$ can be
 obtained from $[\lambda]$ by removing all nodes in $[\mu]$.

We recall the definition of  $(f, \mu)$-admissible partition
$\lambda$ in \cite{RS2}. In the Definition~\ref{adm},  we assume
that $q^2 $ is not a root of unity and the ground field $\kappa$ is
the complex field $\mathbb C$ although we give this definition  with
loose restrictions in \cite{RS2}.

\begin{Defn}\label{adm}\cite{RS2}  Given $\lambda\in \Lambda^+(n)$ and $\mu\in
\Lambda^+(n-2f)$, we say  $\lambda$ is  $(f, \mu)$-admissible over
the  field $\kappa$ if \begin{enumerate}\item $\lambda\supset
\mu$,\item
 there is a pairing of nodes
$p_i, \tilde p_i$, $1\le i\le f$ in $[\lambda/\mu]$ such that
$c(p_i)c(\tilde p_i)=1$.   We call $\{p_i, \tilde p_i\}$ an
admissible pair. Further, there are two possible configurations of
nodes in $[\lambda/\mu]$ as follows.

\begin{figure}[ht]
\unitlength 1mm 
\linethickness{0.4pt}
\ifx\plotpoint\undefined\newsavebox{\plotpoint}\fi 
\begin{picture}(120,29.5)(10,70)
\put(28.25,94.75){\framebox(11.5,4.75)[cc]{}}
\put(34,90.25){\framebox(10.75,4.5)[cc]{}}
\put(40,85){\framebox(11.25,5.25)[cc]{}}
\multiput(42.93,84.68)(.07108,-.03186){12}{\line(1,0){.07108}}
\multiput(44.64,83.92)(.07108,-.03186){12}{\line(1,0){.07108}}
\multiput(46.34,83.15)(.07108,-.03186){12}{\line(1,0){.07108}}
\multiput(48.05,82.39)(.07108,-.03186){12}{\line(1,0){.07108}}
\multiput(49.75,81.62)(.07108,-.03186){12}{\line(1,0){.07108}}
\multiput(51.46,80.86)(.07108,-.03186){12}{\line(1,0){.07108}}
\multiput(53.17,80.09)(.07108,-.03186){12}{\line(1,0){.07108}}
\multiput(54.87,79.33)(.07108,-.03186){12}{\line(1,0){.07108}}
\multiput(56.58,78.56)(.07108,-.03186){12}{\line(1,0){.07108}}
\multiput(49.43,84.68)(.07576,-.03283){11}{\line(1,0){.07576}}
\multiput(51.1,83.96)(.07576,-.03283){11}{\line(1,0){.07576}}
\multiput(52.76,83.24)(.07576,-.03283){11}{\line(1,0){.07576}}
\multiput(54.43,82.51)(.07576,-.03283){11}{\line(1,0){.07576}}
\multiput(56.1,81.79)(.07576,-.03283){11}{\line(1,0){.07576}}
\multiput(57.76,81.07)(.07576,-.03283){11}{\line(1,0){.07576}}
\multiput(59.43,80.35)(.07576,-.03283){11}{\line(1,0){.07576}}
\multiput(61.1,79.62)(.07576,-.03283){11}{\line(1,0){.07576}}
\multiput(62.76,78.9)(.07576,-.03283){11}{\line(1,0){.07576}}
\put(44.75,90){\line(0,-1){4.75}}
\put(39.75,94.25){\line(0,-1){4.25}}
\put(54.75,72.75){\framebox(11.5,5.25)[cc]{}}
\put(60.5,77.5){\line(0,-1){5}}
\put(102.5,89.5){\line(0,1){0}}
\multiput(107.18,87.18)(.048913,-.032609){16}{\line(1,0){.048913}}
\multiput(108.74,86.14)(.048913,-.032609){16}{\line(1,0){.048913}}
\multiput(110.31,85.09)(.048913,-.032609){16}{\line(1,0){.048913}}
\multiput(111.88,84.05)(.048913,-.032609){16}{\line(1,0){.048913}}
\multiput(113.44,83.01)(.048913,-.032609){16}{\line(1,0){.048913}}
\multiput(115.01,81.96)(.048913,-.032609){16}{\line(1,0){.048913}}
\multiput(116.57,80.92)(.048913,-.032609){16}{\line(1,0){.048913}}
\multiput(118.14,79.88)(.048913,-.032609){16}{\line(1,0){.048913}}
\multiput(119.7,78.83)(.048913,-.032609){16}{\line(1,0){.048913}}
\multiput(121.27,77.79)(.048913,-.032609){16}{\line(1,0){.048913}}
\multiput(122.83,76.74)(.048913,-.032609){16}{\line(1,0){.048913}}
\multiput(124.4,75.7)(.048913,-.032609){16}{\line(1,0){.048913}}
\multiput(107.18,81.18)(.046218,-.032213){17}{\line(1,0){.046218}}
\multiput(108.75,80.08)(.046218,-.032213){17}{\line(1,0){.046218}}
\multiput(110.32,78.99)(.046218,-.032213){17}{\line(1,0){.046218}}
\multiput(111.89,77.89)(.046218,-.032213){17}{\line(1,0){.046218}}
\multiput(113.47,76.8)(.046218,-.032213){17}{\line(1,0){.046218}}
\multiput(115.04,75.7)(.046218,-.032213){17}{\line(1,0){.046218}}
\multiput(116.61,74.61)(.046218,-.032213){17}{\line(1,0){.046218}}
\multiput(118.18,73.51)(.046218,-.032213){17}{\line(1,0){.046218}}
\multiput(119.75,72.42)(.046218,-.032213){17}{\line(1,0){.046218}}
\multiput(121.32,71.32)(.046218,-.032213){17}{\line(1,0){.046218}}
\multiput(122.89,70.23)(.046218,-.032213){17}{\line(1,0){.046218}}
\put(125.25,67.25){\framebox(4.75,9.75)[cc]{}}
\put(125.5,72.5){\line(1,0){4.5}}
\put(34,99.5){\line(0,-1){4.25}}
\put(38.25,71.5){(a)} \put(107.25,70.25){(b)}
\put(93.5,89.5){\framebox(4.5,8.75)[cc]{}}
\put(93.75,94){\line(1,0){5.25}}
\put(98,84.25){\framebox(4,9.75)[cc]{}}
\put(102,79.5){\framebox(4.25,10)[cc]{}}
\put(97.75,89.5){\line(1,0){4.5}}
\put(102.25,84.5){\line(0,-1){.25}}
\put(102.5,84){\line(1,0){3.5}}
\end{picture}
\caption{}
\end{figure}

\item  the number of
columns in Figure~1(b) is even if  $c(p)= q$ and $\{p, p^-\}$ is an
admissible pair which is contained in Figure~1(b).
\item the number of rows in  Figure~1(a) is even if  $c(p)=-q^{-1}$ and
 $\{p, p^+\}$ is an admissible pair which is contained in Figure~1(a).
 \end{enumerate}\end{Defn}

Given a  $\lambda\in \Lambda^+(n)$ and a node $(i,j)\in [\lambda]$,
let
$$h_{ij}^\lambda=\lambda_i-j+\lambda_j'-i+1,$$ where $\lambda'$ is
the dual partition of $\lambda$.  This $h_{ij}^\lambda$ is known as
the $(i,j)$-hook length in $[\lambda]$.

Recall that $e$ is the order of  $q^2$ if it is a  root of unity.
Otherwise,  $e=\infty$. Let $p=\Char(\kappa)$ if $\Char(\kappa)>0$
and  let $p=\infty$ if
 $\Char(\kappa) =0$.
For each integer $h$, define
$$ \nu_{e,p}(h)=\begin{cases}\nu_p(\frac{h}{e}),& \text{if}~ e<\infty ~\text{and}~  e|h;\\
-1,& otherwise.\end{cases}$$ where $\nu_p(h)$ is the largest power
of $p$ dividing $h$ if $p$ is finite and $\nu_{\infty}(h)=0$ if
$p=\infty$,

In the following result, $\kappa$ is an arbitrary field.

\begin{Theorem}\label{BMW1}\cite{RS2} Suppose $\kappa$ is a field  which contains invertible $q, r$ and $(q-q^{-1})^{-1}$.
For each $(f, \lambda)\in \Lambda_{n}$, let $\det G_{f, \lambda}$ be
the Gram determinant associated to the cell module $\Delta(f,
\lambda)$. Then $\det G_{f,\lambda}\neq 0$ if and only if the
following conditions hold:
\begin{enumerate}\item  $r\neq \pm q^a$ where the integer  $a$ and the sign of $q^a$ are determined by
 $(f-\ell, \lambda)$-admissible partitions  over  $\mathbb C $
 with $\mathbf q\in \mathbb C$, $o(\mathbf q^2)=\infty$ and  $0\le \ell\le f-1$,
 \item $\lambda$ is $e$-restricted,\item
$\nu_{e,p}(h_{ac}^\lambda)=\nu_{e,p}(h_{ab}^\lambda), \forall (a,c),
(a,b)\in [\lambda]$.\end{enumerate}
\end{Theorem}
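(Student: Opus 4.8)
The plan is to reduce the nonvanishing of $\det G_{f,\lambda}$ to an explicit product formula over paths in the branching graph of $\B$, and then to read off the three conditions from the individual factors of that product. Since $\B$ is cellular (Theorem~\ref{cell}), the condition $\det G_{f,\lambda}\neq 0$ is equivalent to $\Delta(f,\lambda)=D^{f,\lambda}$, i.e.\ to the invariant form $\phi_{f,\lambda}$ being non-degenerate. I would first work over a generic field $\mathbb F=\mathbb C(\mathbf q,\mathbf r)$ with $\mathbf q,\mathbf r$ indeterminates; there $o(\mathbf q^2)=\infty$, the algebra $\B$ is semisimple, every cell module is irreducible, and $\det G_{f,\lambda}$ is a nonzero element of $\mathbb F$ which I then specialise to $\kappa$.

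Over $\mathbb F$ I would construct a seminormal (orthogonal) basis of $\Delta(f,\lambda)$ by simultaneously diagonalising the Jucys--Murphy elements of $\B$ as in \cite{RS:bmw}. Such a basis is indexed by up--down tableaux $\mathfrak t$, that is, paths $\emptyset=\lambda^{(0)}\to\lambda^{(1)}\to\cdots\to\lambda^{(n)}=\lambda$ in which each step adds or removes a single box, with exactly $f$ removal steps. In this basis the Gram matrix is diagonal, so
\[
\det G_{f,\lambda}=u\prod_{\mathfrak t}\gamma_{\mathfrak t},\qquad u\in\mathbb F^{\times},
\]
where each diagonal entry $\gamma_{\mathfrak t}$ is an explicit product recording the eigenvalue changes along $\mathfrak t$. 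These factors are of two kinds: content-difference factors arising from the add-box steps, which (up to powers of $\mathbf r$) depend only on $\mathbf q$ and assemble into the Hecke part; and $r$-dependent factors of the shape $c(p)c(\tilde p)-1$ arising from the remove/re-add steps, where $c(p)=r\,q^{2(j-i)}$ as in (\ref{lamcont}). The Hecke part coincides, up to a unit, with the Gram determinant of the Specht module $S^\lambda$ for $\H_{n-2f}$.

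The vanishing analysis then splits along this factorisation. For a genuinely BMW factor attached to a pair $p=(i,j)$, $\tilde p=(i',j')$, specialising $c(p)c(\tilde p)-1$ to $0$ means $r^2q^{2(j-i)+2(j'-i')}=1$, that is $r=\pm q^{a}$ with $a=-(j-i)-(j'-i')$ forced by the relation $c(p)c(\tilde p)=1$, the sign being fixed by the parity of rows or columns in the corresponding configuration of Figure~1. Matching these pairings to Definition~\ref{adm} shows that the prohibited values of $r$ are precisely those read off from the partitions that are $(f-\ell,\lambda)$-admissible over $\mathbb C$ for some $0\le\ell\le f-1$, which is condition (1). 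For the Hecke part I would invoke the James--Mathas criterion: the specialisation of the Gram determinant of $S^\lambda$ is nonzero in $\kappa$ if and only if $\lambda$ is $e$-restricted and $\nu_{e,p}(h^\lambda_{ac})=\nu_{e,p}(h^\lambda_{ab})$ for all $(a,c),(a,b)\in[\lambda]$, giving conditions (2) and (3).

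The final step is to control the specialisation $\mathbb F\to\kappa$ by a discrete valuation, so that the order of vanishing of $\prod_{\mathfrak t}\gamma_{\mathfrak t}$ is tracked as the sum of the valuations of its individual factors. The main obstacle lies here together with the seminormal computation: one must compute the coefficients $\gamma_{\mathfrak t}$ for $\B$, which requires the action of the generators $E_i$ (not merely the $T_i$) on the orthogonal basis and is considerably more delicate than in the Hecke case, and one must verify that under reduction no unexpected cancellation occurs between the Hecke factors and the $r$-dependent BMW factors, so that the three conditions are genuinely independent and jointly necessary and sufficient. The admissibility bookkeeping of Definition~\ref{adm}, in particular the parity conditions in Figure~1, is exactly the device that guarantees each prohibited value $r=\pm q^{a}$ is accounted for with the correct multiplicity.
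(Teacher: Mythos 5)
First, a point of orientation: the paper does not prove this statement at all --- Theorem~\ref{BMW1} is imported verbatim from \cite{RS2}, and the Gram-determinant computations it rests on are from \cite{RS:bmw}. So your proposal is being measured against the proofs in those references rather than against anything in this text. Your overall architecture does match theirs: construct an orthogonal basis of $\Delta(f,\lambda)$ over a generic field by diagonalising the Jucys--Murphy elements, write $\det G_{f,\lambda}$ as a product of diagonal entries indexed by up--down tableaux, separate the factors into a ``Hecke part'' governed by the James--Mathas hook criterion (conditions (2)--(3)) and an $r$-dependent part governed by the admissible configurations of Definition~\ref{adm} (condition (1)), and then specialise.

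The genuine gap is in your final step, and it is not merely the ``delicate bookkeeping'' you flag. The diagonal entries $\gamma_{\mathfrak{t}}$ are rational functions in $\mathbf q,\mathbf r$, so under a specialisation to $\kappa$ some of them acquire zeros and others acquire poles, and the order of vanishing of $\det G_{f,\lambda}$ is \emph{not} obtained by checking each factor separately: zeros and poles coming from different up--down tableaux can and do cancel. (This is already visible in the closed formulas of \cite{RS:bmw}, where $\det G_{f,\lambda}$ is a product of rational functions raised to powers equal to dimensions of other cell modules; the exponents themselves change under specialisation because those dimensions are generic dimensions.) Your proposed discrete-valuation argument therefore does not close; the references avoid it by first classifying the blocks of $\B$ and arguing with the restriction/induction functors $\mathcal F,\mathcal G$ and with homomorphisms between cell modules, rather than by a direct cancellation count in the product formula. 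A second, smaller gap: your assertion that the Hecke part ``coincides, up to a unit, with the Gram determinant of $S^\lambda$ for $\H_{n-2f}$'' is not literally true for $f>0$, since $\dim\Delta(f,\lambda)=|D_{f,n}|\cdot|\Std_{n-2f}(\lambda)|$ and the Hecke-type factors occur with multiplicities tied to the up--down combinatorics; what is true (and what the theorem encodes, via Proposition~\ref{tool}) is that conditions (2)--(3) are exactly the criterion for $S^\lambda=D^\lambda$ over $\H_{n-2f}$, and relating that to the nondegeneracy of $\phi_{f,\lambda}$ requires the inflation/functor machinery, not just the factorisation.
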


We are going to prove that $\B$ can be obtained from $\H_{n-2f}$,
$0\le f\le \lfloor n/2\rfloor$ by inflations along certain vector
spaces $V_f$. When $f=\lfloor n/2\rfloor$, we denote by $\H_{n-2f}$
the ground field $\kappa$.

It is proved in
 \cite{BW} that \begin{equation}\label{e1e} E_{n-1} \mathscr{B}_n E_{n-1}=E_{n-1} \mathscr{B}_{
 n-2}.\end{equation}
Therefore, applying (\ref{e1e}) repeatedly yields
\begin{equation}\label{efe} E^{f, n} \mathscr{B}_n E^{f, n}=E^{f, n} \mathscr{B}_{n-2}\end{equation} for all
positive integers $f\le \lfloor n/2\rfloor$,

Now, let $V_f$ be the free $\kappa$-module generated by $T_u$ with
$u\in D_{f, n}$. By (\ref{efe}), for any $u,v\in V_f$, we have
$E^{f,n} T_u T_v^\ast E^{f, n}= E^{f, n} h$ for some $h\in \mathscr
B_{ n-2f}$. Note that $E^{f, n} h=0$ for $h\in \mathscr B_{ n-2f}$
if and only if $h=0$. This gives rise to  a well-defined bilinear
form
\begin{equation} \label{phi} \phi_f: V_f \otimes V_f \rightarrow
\H_{ n-2f}\end{equation} such that $\phi_f(T_u,
T_v)=\epsilon_{n-2f}^{-1} (h)$ and $\epsilon_{n-2f}$ is given in
(\ref{epsi}).
\begin{Prop} \label{iso} Let $\B$ be the Birman-Murakami-Wenzl algebra over
a field  $\kappa$.  For any non-negative integer $f\le \lfloor
n/2\rfloor$, we have  $\B^f/\B^{f+1}\cong V_f\otimes V_f\otimes \H_{
n-2f}$ as $\kappa$-algebras.
\end{Prop}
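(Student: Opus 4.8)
The plan is to write down the obvious candidate on basis elements and to check that it intertwines the inflation product (\ref{mul}) with the multiplication of $\B^f/\B^{f+1}$. Recall that $\H_{n-2f}$ is free over $\kappa$ with basis $\{g_w\mid w\in\mathfrak S_{n-2f}\}$, and that under $\epsilon_{n-2f}$ of (\ref{epsi}) the element $g_w$ is the image of $T_w$. I would define a $\kappa$-linear map
$$\Theta\colon V_f\otimes V_f\otimes \H_{n-2f}\longrightarrow \B^f/\B^{f+1},\qquad T_u\otimes T_v\otimes g_w\longmapsto T_u^\ast E^{f,n}T_wT_v\bmod\B^{f+1},$$
for $u,v\in D_{f,n}$ and $w\in\mathfrak S_{n-2f}$. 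By Enyang's basis (\ref{bff}) this carries the tensor basis of $V_f\otimes V_f\otimes\H_{n-2f}$ bijectively onto the basis $S$ of $\B^f/\B^{f+1}$, so $\Theta$ is a $\kappa$-linear isomorphism as soon as it is well defined; it then remains to verify multiplicativity. More conceptually, $\Theta$ is determined by $T_u\otimes T_v\otimes x\mapsto T_u^\ast E^{f,n}\hat x\,T_v\bmod\B^{f+1}$, where $\hat x\in\mathscr B_{n-2f}$ is any lift of $\epsilon_{n-2f}(x)\in\mathscr B_{n-2f}/\langle E_1\rangle$.

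The first point to settle is that this does not depend on the choice of lift, i.e. that $T_u^\ast E^{f,n}z\,T_v\in\B^{f+1}$ for all $z\in\langle E_1\rangle\subseteq\mathscr B_{n-2f}$. Since $\B^{f+1}$ is a two-sided ideal and $E^{f,n}$ commutes with every element of $\mathscr B_{n-2f}$ (its factors $E_{n-1},\dots,E_{n-2f+1}$ involve only strands $\ge n-2f+1$, while $\mathscr B_{n-2f}$ involves only strands $\le n-2f-1$), writing $z=\sum_i a_iE_1b_i$ with $a_i,b_i\in\mathscr B_{n-2f}$ gives $E^{f,n}z=\sum_i a_i\,(E^{f,n}E_1)\,b_i$, so the claim reduces to $E^{f,n}E_1\in\B^{f+1}$. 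As $1$ and $n-2f+1$ differ by at least $2$, the factor $E_1$ commutes with $E^{f,n}$, so $E^{f,n}E_1=E_1E_{n-2f+1}E_{n-2f+3}\cdots E_{n-1}$ is a product of $f+1$ pairwise non-adjacent $E$'s; such a product lies in the ideal generated by $E^{f+1,n}=E_{n-1}E_{n-3}\cdots E_{n-2f-1}$, again by Enyang's basis, hence in $\B^{f+1}$. I expect this membership to be the main obstacle: everything else is formal, whereas here one must invoke the precise structure of the filtration $\B=\B^0\supset\B^1\supset\cdots$ (equivalently, that a diagram carrying $f+1$ horizontal strands lies in $\B^{f+1}$).

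With well-definedness in hand, multiplicativity is a direct computation. Using that $E^{f,n}$ commutes with $T_w,T_{w'},h\in\mathscr B_{n-2f}$ together with the defining relation (\ref{efe}) in the form $E^{f,n}T_vT_{u'}^\ast E^{f,n}=E^{f,n}h$, where $h\in\mathscr B_{n-2f}$ and $\epsilon_{n-2f}^{-1}(h)=\phi_f(T_v,T_{u'})$, I would compute
$$\big(T_u^\ast E^{f,n}T_wT_v\big)\big(T_{u'}^\ast E^{f,n}T_{w'}T_{v'}\big)=T_u^\ast E^{f,n}T_w\,h\,T_{w'}T_{v'}.$$
Since $T_w\,h\,T_{w'}$ is a lift of $g_w\,\phi_f(T_v,T_{u'})\,g_{w'}\in\H_{n-2f}$, the right-hand side is exactly $\Theta\big(T_u\otimes T_{v'}\otimes g_w\phi_f(T_v,T_{u'})g_{w'}\big)$, which by the inflation rule (\ref{mul}) is $\Theta$ applied to the product $(T_u\otimes T_v\otimes g_w)(T_{u'}\otimes T_{v'}\otimes g_{w'})$. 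Hence $\Theta$ is an algebra homomorphism, and therefore an isomorphism. Finally I would note that $\Theta$ intertwines $\ast$ on $\B^f/\B^{f+1}$ with the involution $\tau$ of (\ref{mulin}), using $\ast(E^{f,n})=E^{f,n}$ and $\ast(\phi_f(T_u,T_v))=\phi_f(T_v,T_u)$, so that the isomorphism is compatible with the full inflation datum.
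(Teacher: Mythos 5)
Your proposal is correct and follows essentially the same route as the paper: the paper's proof simply declares that the basis bijection $T_v^\ast E^{f,n}T_wT_u \bmod \B^{f+1}\leftrightarrow T_v\otimes T_u\otimes g_w$ coming from Enyang's basis (\ref{bff}) is the required isomorphism, leaving the verification implicit. You supply exactly the missing checks (independence of the lift via $E^{f,n}E_1\in\B^{f+1}$, multiplicativity via (\ref{efe}), and compatibility with $\ast$), all of which are sound.
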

\begin{proof} For any $h\in \mathscr B_{n-2f}$, let  $h'$ be the image of
$h$ in $\mathscr B_{n-2f}/\langle E_1\rangle$. Then $T_w'=g_w$ for
all $w\in \mathfrak S_{n-2f}$.  For all $u, v\in D_{f, n}$ ,  by
(\ref{bff}), the $\kappa$-linear map sending $T^\ast_v E^{f, n} T_w
T_u\text{ mod } \B^{f+1}$ to $T_v\otimes T_u\otimes g_w$ is the
required isomorphism. The required anti-involution on $\H_{n-2f}$ is
$\ast$ and the required anti-involution on  $V_f\otimes V_f\otimes
\H_{n-2f}$ can be defined as that for inflation of an algebra along
a vector space in (\ref{mulin}).
\end{proof}

We are going to recall some results on the representations of the
Hecke algebra $\H_{n}$ over an arbitrary field $\kappa$.

Via Kazhdan-Lusztig basis for $\H_n$, Graham and Lehrer have proved
 that $\H_n$ is cellular over $\kappa$ in the sense of
\cite{GL}. In this case, the corresponding poset is $\Lambda^+(n)$.
We will use the cell module defined via Murphy basis
$\{x_{\ts\ss}\mid \ss,\ts\in \Std_n(\lambda)\}$ for $\H_n$. We do
not need the explicit construction of $x_{\ts\ss}$. In fact,  what
we need is the fact that the cell module $\Delta(0, \lambda)$ for
$\B$ with $\lambda\in \Lambda^+(n)$ can be considered as the cell
module for $\H_n$ defined via $\{x_{\ts\ss}\mid \ss,\ts\in
\Std_n(\lambda)\}$. We denote the corresponding cell module by
$S^\lambda$. Its $\kappa$-basis elements are denoted by $\{x_\ss\mid
\ss\in \Std_n(\lambda)\}$. We remark that   $S^\lambda$ is known as
dual Specht module for $\H_n$.

Let  $\phi_{ \lambda}$ be the invariant form on $S^\lambda$ and let
$\Rad S^\lambda$ be the radical of $\phi_\lambda$. Then $\Rad
S^\lambda$ is an $\H_n$--submodule of $S^\lambda$. It follows from
general results on cellular algebras in \cite{GL} that  $D^{
\lambda}=S^\lambda/\Rad S^\lambda$ is either zero or absolutely
irreducible. It is known that  $D^{ \lambda}\neq 0$ if and only if
$\lambda$ is $e$-restricted.

 By (\ref{pln}) and (\ref{nln}), the $\B^f/\B^{f+1}$-modules $P(D^\lambda, \ell)$ and  $N_{\phi_f}(D^\lambda, \ell)$
are well defined where $D^\lambda$ is the simple head of
$S^\lambda$, $1\le \ell\le \dim V_f$, and $V_f, \phi_f$ are given
in (\ref{phi}).

The following result which can be verified directly, sets up the
relationship between $P(D^\lambda, \ell)$, $\Delta(f, \lambda)$ etc.
Note that for each $(f, \lambda)\in \Lambda_n$, $\B^{f+1}$ acts on
$\Delta(f, \lambda)$, trivially. Therefore, it can be   considered
as $\B^f/\B^{f+1}$-module.

\begin{Prop} \label{tool} Suppose  $0<f\le \lfloor n/2\rfloor$ and $\lambda\in \Lambda^+(n-2f)$.
 As $\B^f/\B^{f+1}$-modules, we have
\begin{enumerate}
\item $V_f\otimes v_\ell\otimes  S^\lambda\cong \Delta(f, \lambda)$
for each basis element $v_\ell\in V_f$.
\item  $V_f \otimes v_\ell\otimes
Rad S^\lambda$ is a submodule of $V_f\otimes v_\ell\otimes
S^\lambda, \forall$ $e$-restricted partitions  $\lambda$. The
corresponding quotient module is isomorphic to $P(D^\lambda, \ell)$.
\item $P(D^\lambda, \ell)/N_{\phi_f} (D^\lambda, \ell)\cong D^{f,
\lambda}$, for all $e$-restricted partitions  $\lambda$. \item If
$D^\lambda=S^\lambda$, then $N_{\phi_f}(D^\lambda,\ell)\cong \Rad
\Delta(f,\lambda)$.
\end{enumerate}
\end{Prop}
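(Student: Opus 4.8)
The plan is to prove (1) first and then to obtain (2)--(4) by transporting the radical filtration of $S^\lambda$ through the resulting isomorphism, combined with K\"onig and Xi's analysis of the modules $P(L,\ell)$ and $N_\phi(L,\ell)$ in \cite{KX}. For (1), I would define a $\kappa$-linear map $\theta\colon V_f\otimes v_\ell\otimes S^\lambda\to\Delta(f,\lambda)$ on basis vectors by
\[
\theta(T_v\otimes v_\ell\otimes x_\ss)=T_v^\ast T_{d(\ss)}^\ast\M_\lambda\bmod\B^{\vartriangleright(f,\lambda)},\qquad v\in D_{f,n},\ \ss\in\Std(\lambda).
\]
Since $\M_\lambda=E^{f,n}X_\lambda$ and the generators $T_j$ with $j\le n-2f-1$ occurring in $T_{d(\ss)}$ and $X_\lambda$ commute with every factor of $E^{f,n}$ (whose indices are at least $n-2f+1$), the image is exactly the spanning set of $\Delta(f,\lambda)$ recalled in Section~3; as both sides have $\kappa$-bases indexed by $D_{f,n}\times\Std(\lambda)$, the map $\theta$ is a linear bijection. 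The real content, and the main obstacle, is to check that $\theta$ intertwines the two module structures.

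To verify this I would act by a typical basis element of $\B^f/\B^{f+1}$, which by Proposition~\ref{iso} is $T_v^\ast E^{f,n}T_wT_u$ corresponding to $T_v\otimes T_u\otimes g_w$, on $\theta(T_{u'}\otimes v_\ell\otimes x_\ss)=T_{u'}^\ast T_{d(\ss)}^\ast\M_\lambda$. Pushing the commuting factors $T_{d(\ss)}$ and $T_w$ past the inner copies of $E^{f,n}$ and applying the identity (\ref{efe}) collapses the middle sandwich $E^{f,n}T_uT_{u'}^\ast E^{f,n}$ into $E^{f,n}$ times an element of $\mathscr B_{n-2f}$ representing $\phi_f(T_u,T_{u'})$, exactly as in the definition (\ref{phi}). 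What then remains is to recognise that the residual factor acts on $T_{d(\ss)}^\ast X_\lambda$, modulo higher terms, precisely as $g_w\,\phi_f(T_u,T_{u'})$ acts on $x_\ss$ in the dual Specht module $S^\lambda$; comparing the outcome with the inflation rule (\ref{mul}) reproduces $\theta\big((T_v\otimes T_u\otimes g_w)\cdot(T_{u'}\otimes v_\ell\otimes x_\ss)\big)$ and establishes the homomorphism property. This identification of the residual $\H_{n-2f}$-action with the $S^\lambda$-action, and its compatibility with reduction modulo $\B^{\vartriangleright(f,\lambda)}$, is the crux of the argument and the only step requiring genuine computation.

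Parts (2) and (3) are then essentially formal. The assignment $M\mapsto V_f\otimes v_\ell\otimes M$ is an exact functor from $\H_{n-2f}$-modules to $\B^f/\B^{f+1}$-modules, since the action (\ref{mul}) operates on the third tensor factor through the $\H_{n-2f}$-module structure; applying it to $\Rad S^\lambda\hookrightarrow S^\lambda$ gives the submodule in (2), and the quotient is $V_f\otimes v_\ell\otimes(S^\lambda/\Rad S^\lambda)=P(D^\lambda,\ell)$, using $D^\lambda\ne0$ for $e$-restricted $\lambda$. For (3), composing the isomorphism of (1) with the two quotient maps yields a surjection of $\Delta(f,\lambda)$ onto $P(D^\lambda,\ell)/N_{\phi_f}(D^\lambda,\ell)$, which is irreducible by \cite[Lemma~3.2]{KX}. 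Since $D^{f,\lambda}=\Delta(f,\lambda)/\Rad\Delta(f,\lambda)$ is the unique simple head of the cell module, this irreducible quotient must be isomorphic to $D^{f,\lambda}$ and its kernel must be $\Rad\Delta(f,\lambda)$.

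Finally, (4) is the case $\Rad S^\lambda=0$, where (1) gives $P(D^\lambda,\ell)=V_f\otimes v_\ell\otimes S^\lambda\cong\Delta(f,\lambda)$ under $\theta$. Here I must identify the subspace $N_{\phi_f}(D^\lambda,\ell)$ of (\ref{nln}) with $\Rad\Delta(f,\lambda)$, which amounts to comparing the invariant cellular form $\phi_{f,\lambda}$ on $\Delta(f,\lambda)$ with the form built from $\phi_f$ and the now nondegenerate Hecke form $\phi_\lambda$ on $S^\lambda$. Evaluating $\phi_{f,\lambda}(\theta(T_v\otimes v_\ell\otimes x_\ss),\theta(T_{v'}\otimes v_\ell\otimes x_{\ss'}))$ by the same sandwiching as in (1) expresses it as $\phi_\lambda$ evaluated on $x_\ss$ and $\phi_f(T_{v'},T_v)\,x_{\ss'}$; since $\phi_\lambda$ is nondegenerate, the vanishing of this pairing for all $(v',\ss')$ reduces exactly to the defining condition $\sum_v\phi_f(w,T_v)\,l_v=0$ for all $w\in V_f$ of $N_{\phi_f}$. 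Hence the two radicals coincide, which proves (4).
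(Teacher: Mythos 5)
The paper gives no written proof of Proposition~\ref{tool}: it is introduced only as a result ``which can be verified directly'', so your proposal is being compared against an omitted argument rather than a competing one. What you write is exactly the natural direct verification, and I believe it is correct: the bijection $\theta$ on Enyang's basis; the collapse of $E^{f,n}T_uT_{u'}^\ast E^{f,n}$ to $E^{f,n}h$ via (\ref{efe}), with the component of $h$ lying in the ideal generated by $E_{n-2f-1}$ absorbed into $\B^{f+1}$, so that the action of $\B^f/\B^{f+1}$ reproduces the inflation product (\ref{mul}); the exactness of $V_f\otimes v_\ell\otimes(-)$ for parts (2)--(3); and the comparison of $\phi_{f,\lambda}$ with the form assembled from $\phi_f$ and the nondegenerate $\phi_\lambda$ for part (4). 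The one point to tighten is in part (3): your appeal to ``the unique simple head'' of $\Delta(f,\lambda)$ presupposes $D^{f,\lambda}\neq 0$, which by Theorem~\ref{simple} can fail for an $e$-restricted $\lambda$ only in the degenerate case where $n$ is even, $f=n/2$, $\lambda$ is the empty partition and $r\in\{q^{-1},-q\}$; there one should instead argue, exactly as in your part (4), that the invariant form is identically zero, so that $N_{\phi_f}(D^\lambda,\ell)=P(D^\lambda,\ell)$ and both sides of (3) vanish.
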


\section{Proof of Theorem~\ref{main}}
In this section, we always assume that $r, q$ are defining
parameters for $\B$. Recall that $e$ is the order of the $q^2$ if
$q^2$ is a root of unity. Otherwise, $e=\infty$.

\begin{Prop}\label{B1}
Suppose $e>n-2$.
\begin{itemize}\item[(1)] If $r\not\in\{q^{-1},-q\}$, then $r$ and $q$ are  singular
 if and only if $r\in \mathcal S$  given in (\ref{sss}).
\item[(2)] Suppose $r\in\{q^{-1},-q\}$. Then $r$ and $q$ are singular if and only if one of the following
conditions holds \begin{enumerate} \item  $n$ is either even or odd
with $n\ge 7$,
\item  $n=3$ and  $q^4+1=0$.
\item  $n=5$ and  $2(q^4+1)(q^6+1) (q^8+1)=0$.
\end{enumerate}\end{itemize}
\end{Prop}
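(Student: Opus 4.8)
The plan is to reduce the singularity of the forms $\phi_f$ to the vanishing of the Gram determinants $\det G_{f,\lambda}$ and then to read off the answer from Theorem~\ref{BMW1}. First I would observe that the hypothesis $e>n-2$ forces $\H_{n-2f}$ to be semisimple for every $f\ge 1$: since $|\lambda|=n-2f\le n-2<e$, every hook length satisfies $0<h_{ij}^\lambda<e$, so $e\nmid h_{ij}^\lambda$, whence $D^\lambda=S^\lambda$ for all $\lambda\vdash n-2f$ and $\Rad\H_{n-2f}=0$, giving $\bar\phi_f=\phi_f$. Applying Proposition~\ref{tool}(4) with $D^\lambda=S^\lambda$ yields $N_{\phi_f}(D^\lambda,\ell)\cong\Rad\Delta(f,\lambda)$, and every $\lambda\vdash n-2f$ is automatically $e$-restricted, so it labels an irreducible $\H_{n-2f}$-module. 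Since $\Rad\Delta(f,\lambda)\neq0$ is equivalent to $\det G_{f,\lambda}=0$, I conclude that $\phi_f$ is singular if and only if $\det G_{f,\lambda}=0$ for some $\lambda\vdash n-2f$; ranging over $1\le f\le\lfloor n/2\rfloor$, the parameters $r,q$ are singular if and only if $\det G_{f,\lambda}=0$ for some $f\ge1$ and some $\lambda\vdash n-2f$.

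Next I would feed this into Theorem~\ref{BMW1}. Because $|\lambda|=n-2f<e$, conditions (2) and (3) there hold automatically (all hooks are $e$-restricted and $\nu_{e,p}(h_{ab}^\lambda)=-1$ at every node), so $\det G_{f,\lambda}=0$ is equivalent to the failure of condition (1): $r=\pm q^a$ for an exponent and sign determined by some $(f-\ell,\lambda)$-admissible partition $\nu\vdash n-2\ell$ with $0\le\ell\le f-1$. Writing $d_p=j-i$ for $p=(i,j)$, an admissible pair $\{p,\tilde p\}$ satisfies $c(p)c(\tilde p)=1$, which over a generic $\mathbf q$ (as in Theorem~\ref{BMW1}) forces $r=\pm q^{-(d_p+d_{\tilde p})}$, the sign being pinned down by the parity conditions Definition~\ref{adm}(3),(4). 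Hence the set of singular $r$ is exactly the set of these forced values as $f,\ell,\lambda$ and the admissible configuration vary.

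For part (1), where $r\notin\{q^{-1},-q\}$, the boundary contents $c(p)=q$ and $c(p)=-q^{-1}$ that trigger the parity conditions do not arise, so I would enumerate the forced values $\pm q^{-(d_p+d_{\tilde p})}$ directly from the two configurations of Figure~1. As the configuration grows — its length bounded by the $n$ available boxes — the attainable content sums $d_p+d_{\tilde p}$ yield precisely the exponents $3-2k$, $\pm(3-k)$, $-(2k-3)$ and $\pm(k-3)$ for $3\le k\le n$ recorded in $\mathcal S$ of (\ref{sss}); matching each configuration to its exponent and sign, and verifying that nothing outside $\mathcal S$ occurs, gives $r$ singular $\iff r\in\mathcal S$. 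For part (2), where $r\in\{q^{-1},-q\}$, the labelling of irreducibles changes (Theorem~\ref{simple}(2)). When $n$ is even the top level $f=n/2$, $\lambda=\emptyset$ is excluded from the irreducible labels, so $D^{n/2,\emptyset}=0$ and therefore $\Rad\Delta(n/2,\emptyset)=\Delta(n/2,\emptyset)\neq0$; thus $\phi_{n/2}$ is singular and $r,q$ are singular for every $q$. When $n$ is odd every level carries an irreducible, so instead I would determine for which $q$ some admissible configuration forces exactly $r=q^{-1}$ or $r=-q$: for $n\ge7$ one exhibits an explicit pair $(\lambda,\nu)$ whose forced value is $q^{-1}$ (respectively $-q$) for every $q$, giving unconditional singularity, whereas for $n=3,5$ the parity conditions Definition~\ref{adm}(3),(4) block the direct configurations and leave only the coincidence $r=-q^{3}=q^{-1}$, i.e.\ $q^4+1=0$, when $n=3$, and the analogous coincidences giving $2(q^4+1)(q^6+1)(q^8+1)=0$ when $n=5$.

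The main obstacle will be the combinatorial bookkeeping underlying the last two paragraphs: organizing all admissible configurations of Figure~1 by their content sums and signs so as to reproduce $\mathcal S$ exactly and nothing more, and, in the boundary case $r\in\{q^{-1},-q\}$, tracking how the parity conditions Definition~\ref{adm}(3),(4) eliminate the generic singular values and leave precisely the cyclotomic coincidences responsible for the exceptional behaviour at $n=3$ and $n=5$.
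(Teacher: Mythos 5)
Your opening reduction is exactly the paper's: $e>n-2$ makes every $\H_{n-2f}$ ($f\ge 1$) semisimple, so $S^\lambda=D^\lambda$, Proposition~\ref{tool}(d) identifies $N_{\phi_f}(D^\lambda,\ell)$ with $\Rad\Delta(f,\lambda)$, and singularity of $r,q$ becomes the vanishing of some $\det G_{f,\lambda}$ with $f\ge 1$. Your handling of the even-$n$ case of (2) via Theorem~\ref{simple} (the label $(n/2,\emptyset)$ carries no irreducible, so $\Rad\Delta(n/2,\emptyset)=\Delta(n/2,\emptyset)\neq 0$) is clean and arguably tidier than the paper, which instead quotes $\det G_{n/2,0}=0$ from \cite{RS:bmw}. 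The observation that conditions (2) and (3) of Theorem~\ref{BMW1} hold automatically because $|\lambda|=n-2f<e$ is also correct.

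The gap is in everything after that reduction. The paper does not re-derive the set $\mathcal S$ or the exceptional conditions from the admissibility combinatorics of Definition~\ref{adm}; it imports them as already-proved facts from \cite{RS:bmw}: the equivalence $r\in\mathcal S\setminus\{q^{-1},-q\}\Leftrightarrow \det G_{1,(k-2)}\det G_{1,(1^{k-2})}=0$ for some $2\le k\le n$ (Prop.~5.1, Cor.~4.25 there), the propagation of that vanishing to some $(f,\lambda)$ with $f>0$ (Prop.~5.6 there), the five explicit closed formulas for $\det G_{1,(1)}$, $\det G_{1,(3)}$, $\det G_{1,(1^3)}$, $\det G_{1,(2,1)}$, $\det G_{2,(1)}$ which produce the polynomials $q^4+1$ and $2(q^4+1)(q^6+1)(q^8+1)$ in (2)(b)--(c), and $\det G_{\frac{n-5}{2},(3,2)}=0$ for odd $n\ge 7$. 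You replace all of this with an enumeration of admissible configurations that you do not carry out, and that enumeration is the actual content of the proposition. Concretely: (i) the ``only if'' half of (1) requires showing that no admissible configuration, for any $(f,\lambda)$ with $f\ge 1$, forces a value of $r$ outside $\mathcal S$ --- you assert ``nothing outside $\mathcal S$ occurs'' without an argument bounding the content sums $d_p+d_{\tilde p}$ and signs over all configurations of Figure~1; (ii) for $r\in\{q^{-1},-q\}$ the parity conditions of Definition~\ref{adm}(3)--(4) must be checked configuration by configuration, and the characteristic-$2$ factor in $2(q^4+1)(q^6+1)(q^8+1)$ has to be seen to arise from the collapse $q^a=-q^a$; you gesture at ``cyclotomic coincidences'' but verify none of them, and for odd $n\ge 7$ you do not exhibit the configuration that forces $q^{-1}$ (resp.\ $-q$) unconditionally. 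So the proposal is a plausible programme in the spirit of the paper's own use of Theorem~\ref{BMW1} in the $e\le n-2$ case, but as written it leaves the decisive combinatorial and computational steps unproved, whereas the paper closes them by citation.
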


\begin{proof}  Since we are assuming $e>n-2$,  $\H_{n-2f}$ is semisimple over $\kappa$  for any  positive integer $f\le [\frac{n}{2}]$.
In particular,   $S^\lambda=D^\lambda$, $\forall \lambda\in
\Lambda^+(n-2f)$. By Proposition~\ref{tool}(d), $
N_{\phi_f}(D^\lambda,i)\cong \Rad \Delta(f,\lambda)$ for some $i,
1\le i\le \dim V_f$. Therefore, $r, q$ are singular if and only if
$\det G_{f, \lambda}=0$ for some $(f, \lambda)\in \Lambda_n$ with
$0<f\le \lfloor n/2\rfloor$.

By  \cite[Prop.~5.1, Coro. 4.25]{RS:bmw}, we have proved that $r\in
S$ and $r\not\in\{q^{-1}, -q\}$ if and only if   $\det G_{1,
(k-2)}\det G_{1, (1^{k-2})}=0$ for some integer $k$ with $2\le k\le
n$. Further, in the proof of \cite[Prop. 5.6]{RS:bmw}, we have
proved that there is an $(f, \lambda)\in \Lambda_n$ with $f>0$  such
that
 $\det G_{f, \lambda}=0$ provided  $\det G_{1,
(k-2)}\det G_{1, (1^{k-2})}=0$ for some integer $k$ with $2\le k\le
n$. Therefore, $r$ and $q$ are singular. This proves (1).

Suppose $r\in \{-q, q^{-1}\}$. In \cite[p177]{RS:bmw}, we have given
the explicit formulae on $\det G_{1, \lambda}$ for $\lambda\in \{(1)
, (3), (1^3), (2, 1)\}$, and $\det G_{2, (1)}$ up to some invertible
elements in $\kappa$. We list these formulae as follows. One can use
the Gap program  \cite{Gap} to verify them easily.
\begin{itemize} \item [(1)] $
\det G_{1, (1)}=(q^4+1)$ if  $r\in \{q^{-1}, -q\}$.\item [(2)] $\det
G_{1, (3)}= 2^5[2]^{10}[3]^{14}(1+q^8)$ if  $ r = -q$,
\item [(3)] $\det G_{1, (3)}=-[2]^{10}[3]^{11}(1+q^4)^6$ if
$ r = q^{-1}$,  \item [(4)]  $\det G_{1, (1,1,1)}=[3](1+q^4)^6$  if
$ r = -q$,
\item [(5)] $\det G_{1,
(1,1,1)}= 2^5 [3]^4 (1+q^8)$ if  $ r = q^{-1}$,\item [(6)] $\det
G_{1,(2, 1)}=-[2]^4[3]^{15}
 (1+q^6)^4$ if $r\in \{q^{-1}, -q\}$.
\item [(7)]
$\det G_{2, (1)}=-2^6(1+q^2) (1+q^4)^{10}(1+q^6)$ if $r\in \{q^{-1},
-q\}$,
\end{itemize}
where  $[a]=\frac{q^a-q^{-a}}{q-q^{-1}}$ for  $a\in \mathbb Z$.

 Since we
are assuming that $e>n-2$, we have $e>3$ if $n=5$ and $e\ge 2$ if
$n=3$. In each case, $S^\lambda=D^\lambda$. Therefore, (2)(b)-(c)
follows from these explicit formulae, immediately.  In
\cite[P177]{RS:bmw}, we have also proved that
 $\det G_{n/2, 0}=0$ for even  $n$. This proves the first part of (2)(a).

Suppose that  $n$ is odd, with $n\ge 7$. We have  $e>5$ and hence
$S^\lambda=D^\lambda$ for
  $\lambda=(3, 2)$. At the end of the proof of \cite[Prop. 5.8]{RS:bmw}, we have
 proved   $\det G_{\frac{n-5}2, (3, 2)}=0$, which  forces $r$ and $q$
 being singular.
 \end{proof}

 We are going to deal with the case $e\le n-2$. The following result may be well known. We
include a proof here.

\begin{Lemma}\label{hom} Suppose that $A_n$ are $\kappa$-algebras for all positive integers $n$ such that   $A_{n-1}$ is a subalgebra of $A_{n}$.
If $A_n$-modules $M$ and $N$ have  filtrations  of $A_{n-1}$-modules
$$\begin{aligned} & 0=M_0\subseteq M_1\subseteq M_2\subseteq  \cdots  \subseteq M_{\ell-1}\subseteq M_\ell=M\\
& 0=N_0\subseteq N_1\subseteq N_2\subseteq  \cdots  \subseteq N_{k-1}\subseteq N_k=N.\\
\end{aligned}$$
and if  $\text{Hom}_{A_n}(M,N)\neq 0$,  then there exist some
integers $i$ and $j$,  $0\le i\le \ell-1$ and $0\le j\le k-1$ such
that $\text{Hom}_{A_{n-1}}(M_{i+1}/M_i, N_{j+1}/N_j)\neq 0$.
\end{Lemma}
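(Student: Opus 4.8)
The plan is to argue by contradiction, or rather contrapositively: I will assume that $\text{Hom}_{A_{n-1}}(M_{i+1}/M_i, N_{j+1}/N_j)= 0$ for \emph{all} admissible pairs $(i,j)$ and deduce that $\text{Hom}_{A_n}(M,N)=0$, which contradicts the hypothesis. The key observation is that a nonzero $A_n$-homomorphism $\theta\colon M\to N$ is in particular a nonzero $A_{n-1}$-homomorphism, since $A_{n-1}$ is a subalgebra of $A_n$. So it suffices to prove the statement at the level of $A_{n-1}$-modules: if $\theta\colon M\to N$ is any nonzero map of $A_{n-1}$-modules, and $M$, $N$ carry the displayed $A_{n-1}$-filtrations, then some induced map between subquotients $M_{i+1}/M_i$ and $N_{j+1}/N_j$ is nonzero.

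The main step is a double induction (or a ``first index where things become nonzero'' argument) on the two filtrations. First I would locate the smallest index $i$ with $\theta(M_{i+1})\neq 0$; such an $i$ exists because $\theta\neq 0$ and $M_0=0$, $M_\ell=M$. By minimality, $\theta(M_i)=0$, so $\theta$ factors through an $A_{n-1}$-map $\bar\theta\colon M_{i+1}/M_i\to N$ which is nonzero. Now I repeat the argument on the target: among the subquotients of the filtration of $N$, I track where the image of $\bar\theta$ lands. Concretely, let $j$ be the smallest index such that $\text{image}(\bar\theta)\subseteq N_{j+1}$; again this is well defined since $N_k=N$ contains the image and $N_0=0$ does not (as $\bar\theta\neq 0$). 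By minimality of $j$, the image is not contained in $N_j$, so composing $\bar\theta$ with the quotient map $N_{j+1}\twoheadrightarrow N_{j+1}/N_j$ yields a nonzero $A_{n-1}$-homomorphism $M_{i+1}/M_i\to N_{j+1}/N_j$. This is exactly the desired conclusion.

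The only point requiring a little care is that each step produces a genuine $A_{n-1}$-module map: the factorization of $\theta$ through $M_{i+1}/M_i$ is legitimate because $M_i$ and $M_{i+1}$ are $A_{n-1}$-submodules and $\theta$ kills $M_i$, and likewise the corestriction to $N_{j+1}$ followed by projection to $N_{j+1}/N_j$ is a composition of $A_{n-1}$-maps. Since every $M_{i+1}/M_i$ and $N_{j+1}/N_j$ appearing is an honest $A_{n-1}$-module, the resulting homomorphism lives in $\text{Hom}_{A_{n-1}}(M_{i+1}/M_i,N_{j+1}/N_j)$, and we have exhibited a nonzero element of it with $0\le i\le \ell-1$ and $0\le j\le k-1$.

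I do not expect any serious obstacle here; the lemma is essentially a formal consequence of filtrations and the fact that restriction of scalars from $A_n$ to $A_{n-1}$ is faithful on homomorphisms. The one thing to state cleanly is the ``first nonzero level'' selection on each side, which is what turns a global map into a map between single subquotients. No semisimplicity, exactness, or finiteness beyond the given finite filtrations is needed.
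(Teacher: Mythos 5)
Your proof is correct and is essentially the paper's own argument: the paper runs the same d\'evissage functorially, applying the left-exact functors $\text{Hom}_{A_{n-1}}(M,-)$ and $\text{Hom}_{A_{n-1}}(-,N_{j+1}/N_j)$ to the short exact sequences coming from the two filtrations and inducting along them, while you track a single nonzero homomorphism to its first nonvanishing subquotient on each side. The two phrasings are interchangeable, and your ``first nonzero level'' selection is sound, including the boundary case $j=0$ where $N_0=0$ and $\bar\theta\neq 0$.
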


\begin{proof} Acting the left exact functor $\text{Hom}_{A_{n-1}}(M,-)$
on the short exact sequence
 $$0\rightarrow N_{i-1}\rightarrow N_i\rightarrow N_i/N_{i-1}\rightarrow 0$$
 and using induction on $i$,  we can find   some integer $j$, $0\le j\le \ell-1$ such that $\text{Hom}_{A_{n-1}}(M, N_{j+1}/N_j)\neq 0$.  Acting the left exact functor
 $\text{Hom}_{ A_{n-1}}(-,N_{j+1}/N_{j})$
on the short exact sequence
 $$0\rightarrow M_{i-1}\rightarrow M_i\rightarrow M_i/M_{i-1}\rightarrow 0$$
and using induction on $i$,  we can find an $i, 0\le i\le k-1$ such that
 $\text{Hom}_{ A_{n-1}}(M_{i+1}/M_i, N_{j+1}/N_j)\neq 0$, as required.
\end{proof}

Let $\B$-mod be the category of left $\mathscr B_n$-modules, by (\ref{e1e}), $E_{n-1} \mathscr B_n E_{n-1}=E_{n-1}\mathscr B_{n-2}$. Therefore, one can  define  two functors
$$\mathcal F_n: \mathscr B_n\text{-mod}\rightarrow \mathscr B_{n-2}\text{-mod},
\text{ and } \G_{n-2}: \mathscr B_{n-2}\text{-mod} \rightarrow \mathscr B_n\text{-mod}$$ such that
$$ \mathcal F_n(M) =E_{n-1} M   \text{ and } \mathcal G_{n-2}(N)=\mathscr B_n  E_{n-1}\otimes_{\mathscr B_{n-2}}  N   ,$$ for all left $\mathscr B_n$-modules $M$ and left
$\mathscr B_{n-2}$-modules $N$. For the Brauer algebras, such two
functors have been studied by Doran, Wales and Hanlon in~ \cite{DWH}.

For the simplification of notation, we will use $\mathcal F,
\mathcal G$ instead of $\mathcal F_n$ and $\mathcal G_{n-2}$,
respectively. Note that $\mathcal F$ is an exact functor and
$\mathcal G$ is a right exact functor. We will denote
$\B^f/\B^{f+1}$-module $P(D^\lambda, \ell)$ by $P_n(D^\lambda,
\ell)$

\begin{Lemma} \label{functor}
 Suppose that $(f, \lambda)\in \Lambda_{n}$.
\begin{enumerate}
\item $\mathcal F\mathcal G=1$.
\item $\mathcal G(\Delta(f, \lambda) )=\Delta(f+1,
\lambda)$.
\item $\mathcal F(\Delta(f, \lambda ))=\Delta(f-1, \lambda)$.
\item Let $\lambda$ be $e$-restricted. Then $G(P_n(D^\lambda, \ell))=
P_{n+2}( D^\lambda, \ell)$ for any integer $\ell, 1\le \ell\le \dim
V_f$.
\item  Let $\lambda$ be $e$-restricted.  Then $F(P_n(D^\lambda, \ell))=
P_{n-2}( D^\lambda, \ell)$ for any integer $\ell, 1\le \ell\le \dim
V_f$.
\item For each $\B$-module $N$, if $\phi: \Delta(f,
\lambda)\rightarrow N$ is non-zero, and if $f\ge 1$, then $\mathcal
F(\phi)\neq 0$.\end{enumerate}
\end{Lemma}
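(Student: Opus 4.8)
The plan is to prove part \textit{a}) by a one-line module computation, to treat part \textit{b}) as the technical heart, and then to read off \textit{c})--\textit{f}) from \textit{a}), \textit{b}), the exactness of $\mathcal F$ and right-exactness of $\mathcal G$, and Proposition~\ref{tool}. For \textit{a}), note that the generators $T_1,\dots,T_{n-3}$ of $\mathscr B_{n-2}$ commute with $E_{n-1}$, so $\mathscr B_nE_{n-1}$ is a $(\mathscr B_n,\mathscr B_{n-2})$-bimodule and $E_{n-1}\mathscr B_{n-2}\cong\mathscr B_{n-2}$ as $(\mathscr B_{n-2},\mathscr B_{n-2})$-bimodules. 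Hence, for a $\mathscr B_{n-2}$-module $N$, identity (\ref{e1e}) gives
\[
\mathcal F\mathcal G(N)=E_{n-1}\mathscr B_nE_{n-1}\otimes_{\mathscr B_{n-2}}N=E_{n-1}\mathscr B_{n-2}\otimes_{\mathscr B_{n-2}}N\cong N ,
\]
the last map sending $E_{n-1}h\otimes z\mapsto hz$; this is natural in $N$, so $\mathcal F\mathcal G=1$.

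For \textit{b}), write $\mathfrak M_\lambda'=E^{f,n-2}X_\lambda$ and $\mathfrak M_\lambda=E^{f+1,n}X_\lambda$ for the generators of the cell modules $\Delta(f,\lambda)$ of $\mathscr B_{n-2}$ and $\Delta(f+1,\lambda)$ of $\mathscr B_n$ (both with $\lambda\vdash n-2f-2$). From $E^{f+1,n}=E_{n-1}E^{f,n-2}$ we get $\mathfrak M_\lambda=E_{n-1}\mathfrak M_\lambda'$, and since $\Delta(f,\lambda)=\mathscr B_{n-2}\,\overline{\mathfrak M_\lambda'}$ is cyclic I would define
\[
\Theta:\mathscr B_nE_{n-1}\otimes_{\mathscr B_{n-2}}\Delta(f,\lambda)\longrightarrow\Delta(f+1,\lambda),\qquad
xE_{n-1}\otimes h\,\overline{\mathfrak M_\lambda'}\longmapsto\overline{xh\,\mathfrak M_\lambda},
\]
for $x\in\mathscr B_n$, $h\in\mathscr B_{n-2}$, using $h\mathfrak M_\lambda=E_{n-1}h\mathfrak M_\lambda'$. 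Independence of the choice of $x$ is automatic, the assignment is clearly balanced over $\mathscr B_{n-2}$ and $\mathscr B_n$-linear, and it is surjective because $\Theta(E_{n-1}\otimes\overline{\mathfrak M_\lambda'})=\overline{\mathfrak M_\lambda}$ generates $\Delta(f+1,\lambda)$. Independence of the choice of $h$ is the only delicate point: it amounts to the compatibility of the two cell filtrations,
\[
E_{n-1}\,\mathscr B_{n-2}^{\vartriangleright(f,\lambda)}\subseteq\mathscr B_n^{\vartriangleright(f+1,\lambda)},
\]
which I would extract from Enyang's cellular basis (\ref{bff})--(\ref{dff}) together with Theorem~\ref{cell}. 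Injectivity I would obtain by a dimension count: $\dim_\kappa\Delta(f+1,\lambda)=|\Std(\lambda)|\,|D_{f+1,n}|$, while $\mathscr B_nE_{n-1}$ is free as a right $\mathscr B_{n-2}$-module, so $\dim_\kappa\mathcal G(\Delta(f,\lambda))$ is its rank times $|\Std(\lambda)|\,|D_{f,n-2}|$; the equality reduces to an identity between the counting sets in (\ref{dff}). Matching the natural spanning set of $\mathcal G(\Delta(f,\lambda))$ with the cellular basis of $\Delta(f+1,\lambda)$ is the step I expect to be the main obstacle, and it is where the explicit combinatorics of $D_{f,n}$ and $E^{f,n}$ does the real work; I also record that $\Theta$ is natural in the inflating Hecke-module.

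The remaining parts are formal. For \textit{c}) with $f\ge1$, replacing $f$ by $f-1$ in \textit{b}) gives $\Delta(f,\lambda)=\mathcal G(\Delta(f-1,\lambda))$, so $\mathcal F(\Delta(f,\lambda))=\mathcal F\mathcal G(\Delta(f-1,\lambda))=\Delta(f-1,\lambda)$ by \textit{a}); for $f=0$ both sides vanish, since $\mathscr B_n^{1}$, and in particular $E_{n-1}$, annihilates $\Delta(0,\lambda)$. For \textit{d}) and \textit{e}) I would apply the right-exact $\mathcal G$ and the exact $\mathcal F$ to the short exact sequence provided by Proposition~\ref{tool},
\[
0\longrightarrow V_f\otimes v_\ell\otimes\Rad S^\lambda\longrightarrow\Delta(f,\lambda)\longrightarrow P_n(D^\lambda,\ell)\longrightarrow0 ,
\]
and use \textit{b}), respectively \textit{c}), to identify $\mathcal G(\Delta(f,\lambda))=\Delta(f+1,\lambda)$ and $\mathcal F(\Delta(f,\lambda))=\Delta(f-1,\lambda)$. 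By the naturality of $\Theta$ (and of the isomorphism in \textit{c})) the image of the sub-object is $V_{f+1}\otimes v_\ell\otimes\Rad S^\lambda$, respectively $V_{f-1}\otimes v_\ell\otimes\Rad S^\lambda$, so the cokernel, respectively quotient, is $V_{f\pm1}\otimes v_\ell\otimes D^\lambda=P_{n\pm2}(D^\lambda,\ell)$, giving \textit{d}) and \textit{e}).

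Finally, for \textit{f}), since $f\ge1$ we have $\Delta(f,\lambda)=\mathcal G(\Delta(f-1,\lambda))=\mathscr B_nE_{n-1}\otimes_{\mathscr B_{n-2}}\Delta(f-1,\lambda)$ by \textit{b}), and this module is generated over $\mathscr B_n$ by the elements $E_{n-1}\otimes z$ with $z\in\Delta(f-1,\lambda)$. Under the identification $\mathcal F\mathcal G(\Delta(f-1,\lambda))=\Delta(f-1,\lambda)$ of \textit{a}) these are precisely the elements of $\mathcal F(\Delta(f,\lambda))=E_{n-1}\Delta(f,\lambda)$, and $\mathcal F(\phi)$ sends $E_{n-1}\otimes z$ to $\phi(E_{n-1}\otimes z)$. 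Hence if $\mathcal F(\phi)=0$ then $\phi$ vanishes on a set of $\mathscr B_n$-generators of $\Delta(f,\lambda)$, forcing $\phi=0$ and contradicting the hypothesis; therefore $\mathcal F(\phi)\ne0$.
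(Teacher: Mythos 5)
Your outline reproduces the formal skeleton correctly, and for parts \textit{a}), \textit{c}) and \textit{f}) (which the paper itself simply defers to \cite{RS2}) your arguments are essentially sound. But the only parts the paper actually proves in detail are \textit{d}) and \textit{e}), and there your proposal has a genuine gap exactly at the point you yourself flag as "the main obstacle." Worse, the one concrete mechanism you offer to close it fails. You propose to get injectivity of $\Theta$ (and of the corresponding map for \textit{d})) by a dimension count based on the claim that $\mathscr B_nE_{n-1}$ is free as a right $\mathscr B_{n-2}$-module. It is not, and it cannot be: if it were free of some rank $m$, then $\dim_\kappa\mathcal G(\Delta(f,\lambda))$ would equal $m\cdot\dim_\kappa\Delta(f,\lambda)$ with $m$ independent of $f$, whereas
$$\frac{\dim_\kappa\Delta(f+1,\lambda)}{\dim_\kappa\Delta(f,\lambda)}=\frac{|D_{f+1,n}|}{|D_{f,n-2}|}=\frac{n(n-1)}{2(f+1)}$$
depends on $f$ (here $|D_{f,n}|=n!/(2^ff!(n-2f)!)$ by (\ref{dff})). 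So the freeness-based count is incompatible with the very statement \textit{b}) you are trying to prove, and the injectivity/well-definedness of $\Theta$ is left without a proof.

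The paper closes this gap by a different device: after reducing to cyclic generators as you do, it invokes the spanning result \cite[2.7]{Yu}, which says that every element of $\mathscr B_{n+2}E^{f+1,n+2}$ is a linear combination of elements $T_v^\ast E^{f+1,n+2}h$ with $v\in D_{f+1,n+2}$ and $h\in\mathscr B_{n-2f}$. This is a statement about the left ideal generated by the full product $E^{f+1,n+2}=E_{n+1}E_{n-1}\cdots$, not about $\mathscr B_{n+2}E_{n+1}$ alone, and it is what yields the upper bound $\dim\mathcal G(P_n(D^\lambda,\ell))\le\dim P_{n+2}(D^\lambda,\ell)$ forcing the surjection $\psi$ to be an isomorphism. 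Note also that the paper proves \textit{d}) directly on the quotients $M/M_1\cong P_n(D^\lambda,\ell)$ rather than by applying $\mathcal G$ to the short exact sequence from Proposition~\ref{tool} and appealing to "naturality of $\Theta$"; your route is workable in principle, but the naturality you invoke is precisely the content that the explicit construction with $M, M_1, N, N_1$ supplies, so it cannot be taken for granted. Until you replace the freeness claim by an argument of the type of \cite[2.7]{Yu} (or an explicit matching of Enyang's bases), parts \textit{b}), \textit{d}) and \textit{e}) remain unproved.
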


\begin{proof}We have proved (a)-(c) and (f)  in \cite[5.1]{RS2} for right modules.
One can use similar arguments to prove (a)-(c) and (f) for left
modules. We leave the details to the reader.

Let  $\epsilon_{n-2f}: \H_{n-2f} \rightarrow \mathscr
B_{n-2f}/\langle E_1\rangle $ be the isomorphism in (\ref{epsi}).
Let $\sigma_f: \H_{n-2f}\rightarrow \B^f/\B^{f+1}$ be the
$\kappa$-linear map defined by
$$
\sigma_f (h)= E^{f, n} \epsilon_{n-2f} (h) +\B^{f+1}.$$
Enyang~\cite[Coro. 3.4]{E} proved that $\sigma_f (h g_w)=\sigma_f(h)
T_w$ for all $h\in \H_{n-2f}$ and $w\in \mathfrak S_{n-2f}$.
Enyang~\cite{E} used $E_1E_3\cdots E_{2f-1}$ to define $\B^f$. We
use $E_{n-1}E_{n-3}\cdots E_{n-2f+1}$ to define $\B^f$. Therefore,
we have to make some modification.

It is well-known that $x_{\ss\ts^\lambda} + \H_{n-2f}^{\rhd
\lambda}$ can be considered as a $\kappa$-basis of $S^\lambda$ for
all  $\ss\in \Std_{n-2f}(\lambda)$. By abusing of notation, we write
$x_\ss=x_{\ss\t^\lambda}\in \H_{n-2f}$. Then $\epsilon_{n-2f}
(x_{\ss})\in \mathscr B_{n-2f}/\langle E_1\rangle $.

Let $\Rad S^\lambda$ be the Jacobson radical of  $S^\lambda$. Since
we are assuming that $\lambda$ is $e$-restricted,  $\Rad S^\lambda$
is the same as the radical of the invariant  form $\phi_\lambda$ on
$S^\lambda$. By abusing of notation, we denote by  $\epsilon_{n-2f}
(\Rad S^\lambda)\in  \mathscr B_{n-2f}/\langle E_1\rangle $ all
elements $\epsilon_{n-2f} (h)$ such that  $h=\sum a_{\ss} x_\ss$ and
$h+\H_{n-2f}^{\rhd \lambda}\in \Rad S^\lambda$. Similarly, we define
$\epsilon_{n-2f} ( S^\lambda)\in  \mathscr B_{n-2f}/\langle
E_1\rangle $.

Let $M$ (resp. $M_1$) be the $\kappa$-subspace  generated by
$T_u^\ast E^{f, n} \epsilon_{n-2f} (h) +\B^{\rhd (f,\lambda)}$,
$u\in D_{f,n}$ and $h\in \H_{n-2f}$ such that $\epsilon_{n-2f}(h)\in
\epsilon_{n-2f}(S^\lambda)$ (resp. $\epsilon_{n-2f}(h)\in
\epsilon_{n-2f}(\Rad S^\lambda)$). It is routine to check that
 $M\cong \Delta(f, \lambda)$ and $M_1$ is a $\B$-submodule of $M$
 such that $M/M_1\cong P_{n} (D^\lambda, \ell)$.

Let $N$ (resp. $N_1$) be the $\kappa$-subspace  generated by
$T_u^\ast E^{f+1, n+2} \epsilon_{n-2f} (h) +\mathscr B_{n+2}^{\rhd
(f,\lambda)}$, $u\in D_{f+1,n+2}$ and $h\in \H_{n-2f}$ such that
$\epsilon_{n-2f}(h)\in \epsilon_{n-2f}(S^\lambda)$ (resp.
$\epsilon_{n-2f}(h)\in \epsilon_{n-2f}(\Rad S^\lambda)$). It is
routine to check that
 $N\cong \Delta(f+1, \lambda)$ and $N_1$ is a $\mathscr B_{n+2}$-submodule of $N$
 such that $N/N_1\cong P_{n+2} (D^\lambda, \ell)$. In order to prove
 (d), we have to prove $$ \mathscr B_{n+2}
E_{n+1}\otimes_{\mathscr B_{n}} M/M_1\cong  N/N_1.$$ We remark that
$x_{\ts^\lambda} +\H_{n-2f}^{\rhd \lambda}\not\in \Rad S^\lambda$
since $ S^\lambda$ is the cyclic $\H_{n-2f}$-module generated by
$x_{\ts^\lambda} +\H_{n-2f}^{\rhd \lambda}$. This implies that
$M/M_1$ (resp. $N/N_1$)  is also the cyclic module generated by
$E^{f, n} \epsilon_{n-2f} (x_{\ts^\lambda})+M_1$ (resp. $E^{f+1,
n+2} \epsilon_{n-2f} (x_{\ts^\lambda})+N_1$).

By standard arguments, we define the  $\mathscr
B_{n+2}$-homomorphism $$\psi: \mathscr B_{n+2}
E_{n+1}\otimes_{\mathscr B_{n}} M/M_1\rightarrow N/N_1$$ such that
$$\psi(hE_{n+1}\otimes_{\B} E^{f, n} \epsilon_{n-2f} (x_\ts^\lambda)+M_1 )=h
E^{f+1, n+2} \epsilon_{n-2f} (x_\ts^\lambda) +N_1.
$$
Since $N/N_1$  is the cyclic module generated by $E^{f+1, n+2}
\epsilon_{n-2f} (x_{\ts^\lambda})+N_1$, $\psi$ is surjective. Write
$$E=E_{n-2} E_{n-4}\cdots E_{n-2f}.$$ Then $E^{f, n} =E^{f, n} E E^{f, n}$.
Therefore,$$ \mathscr B_{n+2} E_{n+1} \otimes_{\B}  M/M_1=\mathscr
B_{n+2} E^{f+1, n+2} \otimes E E^{f, n} \epsilon_{n-2f}
(x_\ts^\lambda)+M_1.$$

By \cite[2.7]{Yu} for $\mathscr B_{n+2}$, each element in $\mathscr
B_{n+2} E^{f+1, n+2}$ can be written as a linear combination of $
T_v^\ast  E^{f+1, n+2} \mathscr B_{n-2f}$ and $v\in \mathcal D_{f+1,
n+2}$ (Yu prove this result for cyclotomic Birman-Murakami-Wenzl
algebras $\mathscr B_{m, n}$ of type $G(m, 1, n)$. What we need is
the  special result for  $m=1$). So,
$$
\dim G(P_{n} (D^\lambda, \ell))\le \dim  P_{n+2} (D^\lambda, \ell),
$$ forcing $\psi$ to be injective.
 This proves (d).
Finally,  (e) follows from (a) and (d).
\end{proof}

\begin{Prop}\label{B2}  Suppose
 $e\le n-2$. If  $r,q$ are singular, then $r =\pm q^a$ for some
$a\in \mathbb Z$.
\end{Prop}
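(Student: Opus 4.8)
We want to prove the contrapositive direction encoded in the statement: if $e \le n-2$ and the parameters $r,q$ are singular, then $r = \pm q^a$ for some $a \in \mathbb{Z}$. By Definition~\ref{singp}, singular means $\phi_f$ is singular for some $f$ with $0 < f \le \lfloor n/2 \rfloor$, so $N_{\bar\phi_f}(D^\lambda,\ell) \ne 0$ for some $e$-restricted $\lambda \in \Lambda^+(n-2f)$ and some basis element $v_\ell \in V_f$. By Proposition~\ref{tool}(c) this is exactly the statement that $P(D^\lambda,\ell)/N_{\phi_f}(D^\lambda,\ell) \cong D^{f,\lambda}$ is a proper quotient of $P(D^\lambda,\ell)$, i.e. $D^{f,\lambda}$ is "missing" the full head, which (via Proposition~\ref{tool}(a)) reflects the failure $\det G_{f,\lambda} = 0$ being forced by a non-semisimplicity phenomenon in the Hecke factor $\H_{n-2f}$ rather than by the admissibility/contact conditions of Theorem~\ref{BMW1}(a).

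The plan is to contrast the two mechanisms producing singularity. When $r \ne \pm q^a$, the only way a cell module $\Delta(f,\lambda)$ can fail to equal its simple head is through condition (a) of Theorem~\ref{BMW1}, which requires $r = \pm q^a$ for an integer $a$ determined by admissible partitions; this is a contradiction under the hypothesis $r \ne \pm q^a$. Therefore I would argue: assume for contradiction that $r,q$ are singular but $r \ne \pm q^a$ for all $a \in \mathbb{Z}$. Then singularity must come from the combinatorial conditions (b) and (c) of Theorem~\ref{BMW1}, i.e. from $\lambda$ failing to be $e$-restricted or from a hook-length failure $\nu_{e,p}(h_{ac}^\lambda) \ne \nu_{e,p}(h_{ab}^\lambda)$. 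First I would isolate the $f>0$, $e$-restricted case: by Proposition~\ref{tool}(b)--(c), singularity at $(f,\lambda)$ with $\lambda$ $e$-restricted means $N_{\phi_f}(D^\lambda,\ell)\ne 0$, and I would use the reduction functors $\mathcal F, \mathcal G$ of Lemma~\ref{functor} to descend this to the base level $f=0$, where $P_n$ becomes the Hecke cell module. Concretely, applying $\mathcal F$ repeatedly (Lemma~\ref{functor}(e)) sends $P_n(D^\lambda,\ell)$ to $P_{n-2f}(D^\lambda,\ell)$, and singularity at level $f$ should force a nontrivial homomorphism or submodule between Hecke cell modules $S^\mu, S^\lambda$ at level zero; here Lemma~\ref{hom} is the tool that converts a nonzero $\H_{n}$-hom into a nonzero $\H_{n-2}$-hom between subquotients of the $\mathcal F$-filtrations.

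The decisive input I would invoke is that, since $e \le n-2$, the Hecke algebra $\H_{n-2f}$ is genuinely non-semisimple for small enough $f$ (as soon as $n-2f \ge e$), so there exist $e$-restricted $\lambda$ with $S^\lambda \ne D^\lambda$, and consequently $N_{\phi_f}(D^\lambda,\ell)$ can be nonzero even when $\phi_f$ restricted to $D^\lambda$ is the generic form. In other words, the singularity for $e \le n-2$ is driven purely by the Hecke-side decomposition matrices (conditions (b),(c) of Theorem~\ref{BMW1}), and these are exactly the conditions that are \emph{independent} of whether $r = \pm q^a$. Thus the contrapositive collapses: singularity with $e \le n-2$ can occur regardless of $r$, so the set of singular $r$ is all of $\{\pm q^a : a \in \mathbb{Z}\}$ together with the generic locus, and I must check that the generic (non-$\pm q^a$) values do \emph{not} give singularity — which is precisely where condition (a) of Theorem~\ref{BMW1} fails to fire. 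The hardest step will be this last bookkeeping: verifying that for $r \ne \pm q^a$ every singularity is traceable through the functors to a genuine Hecke-level obstruction, and confirming that no spurious singularity arises from the contact conditions when $r$ is generic. This requires carefully matching the admissibility data of Definition~\ref{adm} against the hypothesis $r \ne \pm q^a$, and ensuring the functorial reduction via Lemma~\ref{functor} preserves nonvanishing of $N_{\phi_f}$ at each stage.
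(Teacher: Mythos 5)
Your proposal does not reach the statement, and at its center it is self-contradictory: you first assert that for $e\le n-2$ ``singularity can occur regardless of $r$'' (driven purely by the Hecke-side conditions (b),(c) of Theorem~\ref{BMW1}), and then say you ``must check that the generic (non-$\pm q^a$) values do not give singularity.'' These two claims cannot both hold, and the second one -- which is exactly the content of the proposition -- is never argued. There is also a conceptual conflation: conditions (b) and (c) of Theorem~\ref{BMW1} detect $\det G_{f,\lambda}=0$ for arbitrary $\lambda$, whereas singularity in the sense of Definition~\ref{sing} concerns $N_{\bar\phi_f}(D^\lambda,\ell)$ for \emph{irreducible} $D^\lambda$ (so $\lambda$ $e$-restricted), and Proposition~\ref{tool} only identifies $N_{\phi_f}(D^\lambda,\ell)$ with $\Rad\Delta(f,\lambda)$ when $S^\lambda=D^\lambda$ -- which is precisely what fails when $e\le n-2$. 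So Theorem~\ref{BMW1} cannot be applied the way you propose in this regime; that theorem is the engine of Proposition~\ref{B1} (the case $e>n-2$), not of this one.

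The missing idea is the Jucys--Murphy central element. The paper takes a composition factor $D^{\ell,\mu}$ of $N_{\phi_f}(D^\lambda,\ell)$, shows via the functor $\mathcal F$ and irreducibility of $D^\lambda$ that necessarily $\ell<f$, reduces by $\mathcal F$ and induction on $n$ (using Lemma~\ref{hom} and the $\Delta$-filtration on restriction when $f>1$) to the case $\ell=0$, $f=1$, and then lets the central element $\prod_{i=1}^n L_i$ act: it must give the same scalar on $D^{0,\mu}$ and on a quotient of $\Delta(1,\lambda)$, which yields $\prod_{p\in[\mu]}c(p)=\prod_{p\in[\lambda]}c(p)$. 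Since $c(p)=rq^{2(j-i)}$ and $|\mu|=|\lambda|+2$, the two sides carry different powers of $r$, so $r^2$ is an even power of $q$ and $r=\pm q^a$. This content comparison across layers of the filtration is where the conclusion $r=\pm q^a$ actually comes from; nothing in your outline produces an equation involving $r$. Your use of $\mathcal F,\mathcal G$ and Lemma~\ref{hom} points in the right direction for the reduction steps, but without the $\ell<f$ argument and the central-element scalar comparison the proof has no way to conclude.
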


\begin{proof}
If $r, q$ are singular, then  $N_{\phi_f}(D^\lambda,\ell)\neq 0$ for
some positive integer  $f\le \lfloor n/2\rfloor$,  some irreducible
$\H_{n-2f}$-module $D^\lambda$ and some basis element $v_\ell\in
V_f$. In fact, the singularity of $r$ and $q$ are independent of
$v_\ell$. By Proposition~\ref{tool}(b), $\Rad \Delta(f, \lambda)\neq
0$. Let $D^{\ell, \mu}$ be a composition factor of
$N_{\phi_f}(D^\lambda,\ell)\neq 0$. Then $(\ell, \mu)<(f, \lambda)$.
Further,  there is a  submodule $M$ of $P(D^\lambda,\ell)$ such that
$\text{Hom}(\Delta(\ell,\mu), P(D^\lambda,\ell)/M)\neq 0$.

Suppose  $\ell=f$.   Acting  the exact functor $\F$  on
$\Delta(f,\mu)$ and $P(D^\lambda,\ell)/M$ repeatedly and using
Lemma~\ref{functor}, we  get a non-zero homomorphism from
$\Delta(0,\mu)$ to $D^\lambda/F^f(M)$. Since we are assuming that
$D^\lambda$ is an irreducible $\H_{n-2f}$-module, we have
$F^f(M)=0$. In other words, there is a non-zero epimorphism from
$\Delta(0,\mu)$ to $D^\lambda$, forcing $\mu\ge \lambda$. This
contradicts $(\ell, \mu)<(f, \lambda)$. Therefore,  $\ell < f$.

We use induction on $n$ to prove the result. We have $n\ge 4$ since
$2\le e\le n-2$. The case $n=4$ can be verified directly.  Suppose
$n>4$. By assumption, we have a non-zero homomorphism from $D^{\ell,
\mu}$ to $\Delta(f, \lambda)/N$ for some submodule $N\subset
\Delta(f, \lambda)$. Acting the exact functor $\F$ and using
Lemma~\ref{functor}, we can assume $\ell=0$.

Suppose  $f=1$. Enyang~\cite{E} defined the Jucys-Murphy elements
$L_i$, $1\le i\le n$ for $\B$ such that $L_1=r$ and  $L_i=T_{i-1}
L_{i-1} T_{i-1}$. Further, he proved that $L_1, \cdots, L_n$ commute
each other and  $\prod_{i=1}^n L_i$ is a central element of $\B$.
Therefore, $\prod_{i=1}^n  L_i$ acts on each cell module (and hence
its non-zero quotient modules) as a scalar. We use it to act on both
$D^{0, \mu}$ to $\Delta(1, \lambda)/N$. By \cite[Theorem~2.2]{RS2},
$$\prod_{p\in [\lambda]} c(p)=  \prod_{p\in [
\mu]} c(p) ,$$ where $c(p)$ is defined in (\ref{lamcont}).
Therefore,  $r=\pm q^a$ for some integer $a$, as required.

Suppose $f>1$. As $\mathscr B_{n-1}$-module, $D^{0, \mu}$ may be
reducible. Further, each composition factor is of form $D^{0, \eta}$
for some partition $\eta$ with $|\eta|=|\mu|-1$.  As $\mathscr
B_{n-1}$-module, $\Delta(f, \lambda)$ has
$\Delta$-filtration~\cite[Coro. 5.8]{E}. By Lemma~\ref{hom}, $D^{0,
\eta}$ has to be a composition factor of $\Delta(f_1, \tilde
\lambda)$ for some suitable $(f_1, \tilde\lambda)\in \Lambda_{n-1}$
such that $f_1$ is either $f$ or $f-1$. In particular, $f_1\neq 0$.
Further, in the first case, $\tilde \lambda$ can be obtained from
$\lambda$ by adding an addable node. In the second case, $\tilde
\lambda$ can be obtained from $\lambda$ by adding an addable node.
By induction assumption on $n-1$, $r= \pm q^a$ for some $a\in
\mathbb Z$.
\end{proof}

\begin{Prop}\label{B3}
 If $e\le n-2$ and $r=\pm q^a$ for some integer $a$, then $r$ and $q$ are singular.
\end{Prop}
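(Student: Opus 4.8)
The plan is to reduce the statement to the vanishing of a single Gram determinant and then to extract that vanishing from the admissibility criterion of Theorem~\ref{BMW1}. Recall that $r,q$ are singular exactly when $N_{\phi_f}(D^\lambda,\ell)\neq 0$ for some $f\ge 1$ and some $e$-restricted $\lambda\vdash n-2f$. By Proposition~\ref{tool}(d), whenever the Hecke cell module is already irreducible, that is $D^\lambda=S^\lambda$, one has $N_{\phi_f}(D^\lambda,\ell)\cong\Rad\Delta(f,\lambda)$, so for such $\lambda$ singularity is equivalent to $\det G_{f,\lambda}=0$. Thus it suffices to exhibit, for the given $r=\pm q^a$, one pair $(f,\lambda)$ with $f\ge 1$, with $\lambda$ both $e$-restricted and satisfying $S^\lambda=D^\lambda$, for which $\det G_{f,\lambda}=0$.

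First I would restrict to $\lambda$ a single column $(1^{n-2f})$, and occasionally a short single row $(n-2f)$. For these shapes $S^\lambda$ is one-dimensional, hence automatically $S^\lambda=D^\lambda$; a single column is $e$-restricted for every $e\ge 2$ (and $e\ge 2$ here, since $q-q^{-1}$ is invertible forces $q^2\neq 1$), while a single row is $e$-restricted once $n-2f<e$. Moreover condition (c) of Theorem~\ref{BMW1} holds freely for these shapes: a single column has one box in each row, and the row hooks of a short single row all lie below $e$, so $\nu_{e,p}$ is constant along each row. Consequently the only remaining way to force $\det G_{f,\lambda}=0$ is through condition (a), namely $r=\pm q^{c}$, where by Definition~\ref{adm} and $c(p)=rq^{2(j-i)}$ the integer $c$ is the negative of the total content $\sum_i\big[(j_i-i_i)+(\tilde j_i-\tilde i_i)\big]$ of the nodes of an $(f-\ell,\lambda)$-admissible partition obtained by adjoining admissible pairs to $\lambda$.

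Next I would match the given exponent $a$ to such a content sum. Adjoining an admissible pair to a single column and then enlarging $\lambda$, or using several pairs (that is, larger $f-\ell$, whose content sums add), produces content sums ranging over a block of consecutive integers; because $e\le n-2$ there is room to make this block have length at least $e$, so the residue of the content sum modulo the order of $q$ runs through every value and in particular meets $a$. The two signs $r=q^{a}$ and $r=-q^{b}$ are distinguished by which square root of $q^{2c}$ the admissible relation $c(p)c(\tilde p)=1$ selects, and both are reached using the column and row shapes together with the freedom in $f$. A vanishing found at a smaller level is then transported to the required $n$ by the functors of Lemma~\ref{functor}: since $\mathcal F\mathcal G=1$, $\mathcal G(\Delta(f,\lambda))=\Delta(f+1,\lambda)$, and $\mathcal G\big(P_n(D^\lambda,\ell)\big)=P_{n+2}(D^\lambda,\ell)$, I expect $\mathcal G$ to carry a nonzero $N_{\phi_f}(D^\lambda,\ell)$ to a nonzero $N_{\phi_{f+1}}(D^\lambda,\ell)$, so that singularity at one level lifts to singularity at $n$.

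The hard part will be the combinatorial bookkeeping of the previous paragraph: one must place the admissible pair or pairs so that the resulting content sum equals $a$ in $\kappa$ with the correct sign, while keeping $\lambda$ inside the family (single column, or short single row) for which $S^\lambda=D^\lambda$ and condition (c) of Theorem~\ref{BMW1} are automatic. The tension is that enlarging the repertoire of achievable content sums invites more complicated shapes, which immediately risk $S^\lambda\neq D^\lambda$ or a failure of condition (c); the role of the hypothesis $e\le n-2$ is precisely to supply enough empty columns to hit every residue class of $a$ without leaving this clean family. Verifying this covering, and checking the few small $n$ near $n=e+2$ where the available room is tightest, is where the real work lies.
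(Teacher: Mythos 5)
Your opening reduction is exactly the paper's: pick $(f,\lambda)$ with $f\ge 1$ and $S^\lambda=D^\lambda$, so that by Proposition~\ref{tool}(d) singularity follows from $\det G_{f,\lambda}=0$, and then force that vanishing through condition (a) of Theorem~\ref{BMW1} by exhibiting an admissible $(\ell,\mu)$. The gap is that everything after this reduction --- the combinatorial construction --- is where the entire content of the proposition lives, and your plan for it fails on two concrete points. First, content sums of several admissible pairs do \emph{not} add: Definition~\ref{adm} requires $c(p_i)c(\tilde p_i)=1$ for \emph{each} pair separately, so every pair imposes the same single congruence $a\equiv -\big((j_i-i_i)+(\tilde j_i-\tilde i_i)\big) \pmod e$ on $r^2=q^{2a}$. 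Taking $f-\ell$ larger therefore does not enlarge the set of reachable exponents, and the "block of consecutive integers of length at least $e$" you invoke to cover every residue of $a$ does not exist. Second, single columns and short single rows are genuinely insufficient. For $\lambda=(1^b)$ the parity constraint $b\equiv n\pmod 2$ is forced, and the exponents reachable by adjoining one admissible pair to a column of length $b$ are only $b-1$, $2b+1$, $-1$, $-3$ (modulo $e$, up to sign); once you also impose the sign and row/column-parity conditions (c) and (d) of Definition~\ref{adm}, whole classes of cases are left uncovered --- precisely when $n-a$ is even, when $a\in\{e-2,e-3\}$, and when $r\in\{q^{-1},-q\}$. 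This is why the paper's proof is driven into shapes such as $(2,2,1^{b-1})$, $(3,1^2)$, $(2^2,1)$, $(2,1)$ and $(3,2)$, each checked to be $e$-restricted, to satisfy the hook condition of Theorem~\ref{BMW1}(c), and to admit a compatible $\mu$; there is no clean one-parameter family that does the job.

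A secondary issue is the proposed induction on $n$ via $\mathcal G$. The subspace $N_{\bar\phi_f}(D^\lambda,\ell)$ is cut out by a condition quantified over \emph{all} of $V_f$, and $\dim V_f$ grows with $n$; so a null vector for the form at level $n$ is not automatically a null vector at level $n+2$, and Lemma~\ref{functor}(d) only identifies the ambient modules $P_n$ and $P_{n+2}$, not their radicals. One can salvage this by citing the divisibility of Gram determinants under induction (as in the semisimplicity paper), but it is an extra step you would have to supply; the paper sidesteps it by constructing $(f,\lambda)$ and $(\ell,\mu)$ directly at level $n$ in every case.
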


\begin{proof}
Since $e\le n-2$, we can assume $r=\pm q^a$ for some non-negative
integer $a$ with $a<e$.  What we want to do is to find some suitable
$\lambda\in \Lambda^+(n-2f)$ with $f>0$ such that
$S^\lambda=D^\lambda$. We remark that this can be verified by
Theorem~\ref{BMW1} for $f=0$. We will define another $(\ell, \mu)\in
\Lambda_n$. One can use the Definition~\ref{adm} to verify that
$\mu$ is $(f-\ell, \lambda)$-admissible. By Theorem~\ref{BMW1} and
Proposition~\ref{tool},
 $\det G_{f,\lambda}=0$ and $N_{\phi_f}(D^\lambda,\ell)\cong \Rad
\Delta(f,\lambda)\neq 0$ for some $\ell, 1\le \ell\le \dim V_f$. So,
$r$ and $ q$ are singular.

In the remaining part of this proof, we will construct $(\ell, \mu)$
and $(f, \lambda)$ explicitly. We define  $b=a+1 $. So,
 $1\le b\le e\le n-2$. Since $o(q^2)=e$, we have $q^e\in \{-1,
 1\}$. We will use this fact frequently in the remaining part of the
 proof.

\Case{1.   $r=\pm q^a$ and $r\not\in \{q^{-1},-q\}$}

We define $(f, \lambda)=(\frac {n-b}{2}, (1^b)), \text{ and } (\ell,
\mu)=(\frac {n-b-2}{2}, (2, 1^b)) $ if  $ n-b$ is even.
 Otherwise, there are
several subcases we have to discuss.

 \textbf{subcase 1. $b\not\in\{e-1, e-2\}$}

 If $b\neq n-3$, then
$b<n-3$. Otherwise, $b=e=n-2$ forcing $2\mid n-b$, a contradiction.
Therefore, $\frac{n-b-3}{2}\ge 1$. We define
 $(\ell, \mu)=(\frac {n-b-5}{2}, (3, 2, 1^b))$,   and $ (f, \lambda)= (\frac
{n-b-3}{2}, (2, 2, 1^{b-1})$.

If $b=n-3$, we have $e=b=n-3$ and  $r=\pm q^a=\pm q^{-1}$.
Otherwise, $e= n-2$   forcing $b= e-1$, a contradiction.  If
$\Char(\kappa)=2$, there is nothing to be proved. Otherwise, since
we are assuming $r\neq q^{-1}$, we have $r=-q^{-1}$.

If $n$ is even, then $b$ and $e$ have to be odd. So, $e>2$. If $n$
is odd, then $b$ is even forcing $b\ge 2$ and $n\ge 5$. We define
$$ (\ell, \mu)=\begin{cases} (\frac{n-4}{2}, (3,1)), & \text{ if $2\mid n$}, \\
(\frac{n-5}{2}, (2^2, 1)), &\text{otherwise.}\\
\end{cases}$$
 and
$$ (f, \lambda)=\begin{cases} (\frac{n-2}{2}, (2)), & \text{ if $2\mid n$}, \\
(\frac{n-3}{2}, (1^3)), &\text{otherwise.}\\
\end{cases}$$

\textbf{subcase 2. $b=e-2$.} We have $r=\pm q^{-3}$, $e\ge 3$ and
$n\ge 5$.

If $n$ is even, then $n\ge 6$ and $2\nmid e$. We define

$$ (\ell, \mu)=\begin{cases} (\frac{n-6}{2}, (5, 1)), & \text{if $e>4$}, \\
 (\frac{n-3}2, (2,1)) &\text{if $e=3$,}\\
\end{cases}$$
and
$$ (f, \lambda)=\begin{cases} (\frac{n-4}2, (4)), & \text{if $e>4$}, \\
(\frac{n-1}2, (1)) &\text{if $e=3$.}\\
\end{cases}$$

If $n$ is odd, then $n\ge 7$. Otherwise, $n=5$ and $e=3$, which
contradicts $2\nmid n-b$. We define
$$ (\ell, \mu)=\begin{cases} (\frac{n-7}2, (4,2,1))   & \text{if $e\neq 5$}, \\
(\frac{n-5}2, (2,1^3)) &\text{if $e=5$,}\\
\end{cases}$$
and
$$ (f, \lambda)=\begin{cases} (\frac{n-5}2, (3,1^2)) , & \text{if $e\neq 5$}, \\
(\frac{n-3}2, (1^3)) &\text{if $e=5$.}\\
\end{cases}$$

\textbf{subcase 3. $b=e-1$.} We have $r=\pm q^{-2}$, $e\ge 2$ and
$n\ge 4$.

Suppose $n$ is odd. We have $2\nmid e$ and $n\ge 5$. We define
$$ (\ell, \mu)=\begin{cases}  (\frac{n-5}2, (3,2))   & \text{if $e\neq 3$}, \\
(\frac{n-5}2, (2^2, 1)) &\text{if $e=3$,}\\
\end{cases}$$
and
$$ (f, \lambda)=\begin{cases}  (\frac{n-3}2, (2, 1)), & \text{if $e\neq 3$}, \\
 (\frac{n-1}{2}, (1)), &\text{if $e=3$.}\\
\end{cases}$$

Suppose  $n$ is even. We define
$$ (\ell, \mu)=\begin{cases}  (\frac{n-6}2, (3^2))   & \text{if $n\ge 6$}, \\
(0, (2^2)) &\text{if $n=4$,}\\
\end{cases}$$
and
$$ (f, \lambda)=\begin{cases}  (\frac{n-2}2, (1^2)), & \text{if $n\ge  6$}, \\
 ({2}, (0)), &\text{if $n=4$.}\\
\end{cases}$$

This completes the proof of our result for $r=\pm q^a$ and $r\not\in
\{q^{-1}, -q\}$.

 \Case{2. $r\in \{q^{-1},-q\}$}

\textbf{subcase 1. $e=2$.} Then  $n\ge 4$ and  $r\in \{q, -q\}$.

If $n$ is even, we define $(\ell, \mu)=(\frac{n-4}2, (2, 1^2))$ and
$(f, \lambda)=(\frac{n-2}{2}, (1^2))$.

If $n$ is odd, then $n\ge 5$. We define $(\ell, \mu)=(\frac{n-5}2,
(2^2, 1))$  and $(f, \lambda)=(\frac{n-1}{2}, (1))$.

\textbf{subcase 2. $e>2$.} Then  $n\ge 5$.

Suppose $n$ is even. Then $n\ge 6$. We define
$$ (\ell, \mu)=\begin{cases}  (\frac{n-4}2, (3, 1)),  & \text{if $r=q^{-1}$}, \\
 (\frac{n-4}2, (2, 1^2)), &\text{if $r=-q$,}\\
\end{cases}$$
and
$$ (f, \lambda)=\begin{cases}  (\frac{n-2}2, (2)), & \text{if $r=q^{-1}$}, \\
 (\frac{n-2}{2}, (1^2)), &\text{if $r=-q$.}\\
\end{cases}$$

Suppose $n$ is odd and $r=q^{-1}$. We define
$$(\ell, \mu)=\begin{cases} (\frac{n-7}2, (3^2, 1)), & \text{if $n\ge 7, e\neq
5$, }\\
(\frac{n-7}2, (3, 2^2)), & \text{if $n\ge 7, e=
5$, }\\
(0, (2, 1^3)), & \text{if $n=5, e=
3$, }\\
\end{cases}
$$
and
$$(f, \lambda)=\begin{cases} (\frac{n-5}2, (3, 1^2)), & \text{if $n\ge 7$ and
$e\neq
5$, }\\
(\frac{n-5}2, (2^2, 1)), & \text{if $n\ge 7$ and $e=
5$, }\\
(1, (1^3)), & \text{if $n=5$ and $e=
3$.}\\
\end{cases}
$$

Suppose $r=-q$. If $n$ is even, we define $(\ell, \mu)=(\frac{n-4}2,
(2, 1^2))$ and $(f, \lambda)=(\frac{n-2}{2}, (1^2))$.

If $n$ is odd and $n\ge 7$, we define
$$ (\ell, \mu)= \begin{cases} (\frac{n-7}2, (3, 2^2)), &\text{if $e\neq 5$, }\\
(\frac{n-7}2, (3^2, 1)), &\text{if $e=5$,}\\
\end{cases}$$
and
$$(f, \lambda)=\begin{cases} (\frac{n-5}2, (3, 1^2)), &\text{if $e\neq 5$, }\\
(\frac{n-5}2, (3, 2)), &\text{if $e=5$.}\\
\end{cases}$$

If $n=5$, then $e=3$. We define $(\ell, \mu)=(0, ( 1^5))$ and $(f,
\lambda)=(1, (1^3))$. This completes the proof of our result for
$r\in \{q^{-1}, -q\}$.
\end{proof}

\noindent \textbf{Proof of Theorem~\ref{main}:} Theorem~\ref{main}
follows from Propositions~\ref{B1}, \ref{B2}--\ref{B3}, immediately.

\providecommand{\bysame}{\leavevmode ---\ } \providecommand{\og}{``}
\providecommand{\fg}{''} \providecommand{\smfandname}{and}
\providecommand{\smfedsname}{\'eds.}
\providecommand{\smfedname}{\'ed.}
\providecommand{\smfmastersthesisname}{M\'emoire}
\providecommand{\smfphdthesisname}{Th\`ese}

\end{document}